\newtheorem{Thm}{Theorem}[section]
\newtheorem{Cor}[Thm]{Corollary}
\newtheorem{Qu}[Thm]{Question}
\newtheorem{Lem}[Thm]{Lemma}
\newtheorem{Rem}[Thm]{Remark}
\theoremstyle{definition}
\newtheorem{Ex}[Thm]{Example}
\theoremstyle{remark}
\numberwithin{equation}{section}
\newcommand{\Aut}{\operatorname{Aut}}
\newcommand{\Mor}{\operatorname{Mor}}
\newcommand{\stab}{\operatorname{stab}}
\newcommand{\Id}{\operatorname{Id}}
\renewcommand{\dim}{\operatorname{dim}}
\newcommand{\Stab}{\operatorname{Stab}}
\newcommand{\wt}{\operatorname{wt}}
\newcommand{\De}{\mathcal{D}}
\newcommand{\Sym}{\operatorname{Sym}}
\newcommand{\supp}{\operatorname{supp}}
\newcommand{\Alt}{\operatorname{Alt}}
\newcommand{\PSL}{\operatorname{PSL}}
\newcommand{\PG}{\operatorname{PG}}
\newcommand{\AG}{\operatorname{AG}}
\newcommand{\Sp}{\operatorname{Sp}}
\renewcommand{\Gamma}{\varGamma}
\renewcommand{\epsilon}{\varepsilon}
\renewcommand{\leq}{\leqslant}
\renewcommand{\geq}{\geqslant}
\newcommand{\I}{\mathcal{I} }
\newcommand{\B}{\mathcal{B} }
\newcommand{\ep}{\epsilon}
\newcommand{\Z}{\mathbb{Z} }
\renewcommand{\B}{\mathcal{B}}
\renewcommand{\L}{\mathcal{L}}
\newcommand{\E}{\mathcal{E}}
\renewcommand{\L}{\mathcal{L}}
\newcommand{\C}{\mathcal{C}}
\newcommand{\mF}{\mathbb{F}}
\begin{document}


\title{Conway's groupoid and its relatives}
 

\author{Nick Gill}
\address{Department of Mathematics, University of South Wales, Treforest, CF37 1DL, U.K.}
\email{nick.gill@southwales.ac.uk}
\author{Neil I. Gillespie}
\address{Heilbronn Institute for Mathematical Research, Department of Mathematics, University of Bristol, U.K.}
\email{neil.gillespie@bristol.ac.uk}
\author{Cheryl E. Praeger}
\address{Centre for the Mathematics of Symmetry and Computation, University of Western Australia, Australia\\
also affiliated with King Abdulaziz University Jeddah, Saudi Arabia}
\email{cheryl.praeger@uwa.edu.au}

\author{Jason Semeraro}
\address{Heilbronn Institute for Mathematical Research, Department of Mathematics, University of Bristol, U.K.}
\email{js13525@bristol.ac.uk}



\begin{abstract}
In 1997, John Conway constructed a $6$-fold transitive subset $M_{13}$ of permutations on a set of size $13$ for which
the subset fixing any given point was isomorphic to the Mathieu group $M_{12}$. The construction was via  a 
``moving-counter puzzle'' on the projective plane $\PG(2,3)$. We discuss consequences and generalisations 
of Conway's construction. In particular we explore how various designs and hypergraphs can be used instead of 
$\PG(2,3)$ to obtain interesting analogues of $M_{13}$; we refer to these analogues as {\it Conway groupoids}.  A number of 
open questions are presented.
\end{abstract}

\keywords{}

\subjclass[2010]{20B15, 20B25, 05B05}

\maketitle 

\section{The first Conway groupoid \texorpdfstring{$M_{13}$}{M13}}

In 1997, John Conway published a celebrated paper \cite{Co1} in which he constructed the sporadic simple group 
of Mathieu, $M_{12},$ via a ``moving-counter puzzle'' on the projective plane $\PG(2,3)$ of order $3$.  Conway 
noticed some new structural links between two permutation groups, namely  $M_{12}$, which acts 5-transitively on 
12 letters, and $\PSL_3(3)$, which acts 2-transitively on the 13 points of $\PG(2,3)$. These led him to his 
construction of $M_{13}$. In \cite[page 1]{Co1} he writes:

\begin{quote}
To be more precise, the point-stabilizer in $\PSL_3(3)$ is a group of structure $3^2:2\Sym(4)$ that permutes the 12 remaining points imprimitively in four blocks of 4, and there is an isomorphic subgroup of $M_{12}$ that permutes the 12 letters in precisely the same fashion. Again, the line-stabilizer in $\PSL_3(3)$ is a group of this same structure, that permutes the 9 points not on that line in a doubly transitive manner, while the stabilizer of a triple in $M_{12}$ is an isomorphic group that permutes the 9 letters not in that triple in just the same manner.

In the heady days when new simple groups were being discovered right and left, this common structure suggested that there should be a new group that contained both $M_{12}$ and $\PSL_3(3)$, various copies of which would intersect in the subgroups mentioned above.
\end{quote}

\noindent
The putative ``new group'' does not of course exist, but Conway's construction of $M_{12}$ using a certain 
puzzle on $\PG(2,3)$ did yield a natural definition of a \emph{subset} of permutations that contains both 
of these groups in the manner just described, and Conway called it $M_{13}$.
We discuss this puzzle briefly in Subsection~\ref{puzzle}, and then present some recently discovered 
analogues of $M_{12}$ and $M_{13}$ that can be obtained by variants of the puzzle, along with some of 
the geometry associated with these objects, especially focussing on connections to codes. Finally, 
we discuss attempts to classify such puzzles from a geometric and algebraic point of view.

\subsection{Conway's original puzzle}\label{puzzle}

We now present Conway's original puzzle in terms of permutations, rather than the ``counters'' used 
in \cite[Section 2]{Co1}. This description bears little resemblance to what we traditionally think of 
as a ``puzzle'', but contains all of the salient mathematics. Our notation, too, is different from that 
of Conway but prepares the way for what will come later.

We write $\Omega$ for the set of 13 points of $\PG(2,3)$. Each of the 13 lines of $\PG(2,3)$ is incident 
with exactly 4 points, and each pair of points is incident with exactly one line. We think of a line as 
simply a 4-subset of $\Omega$. 
Then, given any pair of distinct points $a,b\in \Omega$ we define the \emph{elementary move}, denoted 
$[a,b]$ to be the permutation $(a,b)(c,d)$ where $\{a,b,c,d\}$ is the unique line in $\PG(2,3)$ containing 
$a$ and $b$. For a point $a\in\Omega$, it is convenient to define the \emph{move} $[a,a]$ to be the 
identity permutation. Then, given a sequence of (not necessarily distinct)  points $a_1, a_2, \dots, 
a_\ell$ we define the \emph{move}
\begin{equation}\label{eq:move}
 [a_1,a_2,\dots, a_\ell] = [a_1,a_2]\cdot [a_2,a_3] \cdots [a_{\ell-1}, a_\ell].
\end{equation}
Note that we apply $[a_1,a_2]$ first, and then $[a_2,a_3]$, and so on, so this move maps $a$ to $b$.\footnote{The
terminology intentionally suggests a ``puzzle'' in which, for example, the move $[\infty,a,b]$ denotes ``moving''
a ``counter'' from $\infty$ first to $a$, using the elementary move $[\infty,a]$, and then moving it from $a$ to 
$b$, using the elementary move $[a,b]$, and so on.}  
Observe that all of these moves are elements of $\Sym(13)$. 

We now choose a point of $\PG(2,3)$, label it $\infty$, and consider two subsets of $\Sym(13)$:
\begin{align}\label{eq:LPi1}
 \pi_\infty(\PG(2,3)) &:= \{ [a_1,a_2,\dots, a_\ell] \mid 1<\ell \in \Z, a_1,\dots, a_\ell \in \Omega, a_1=a_\ell=\infty\}; \nonumber \\
 \L_\infty(\PG(2,3)) &:= \{ [a_1,a_2,\dots, a_\ell] \mid 1<\ell \in \Z, a_1,\dots, a_\ell \in \Omega, a_1=\infty\}.
\end{align}
It is easy to see that the set $\pi_\infty(\PG(2,3))$ (which we call the \emph{hole-stabilizer}) is, in fact, 
a subgroup of $\Sym(\Omega\setminus\{\infty\})\cong\Sym(12)$. Much less trivial is the spectacular fact due 
to Conway that $\pi_\infty(\PG(2,3))$ is isomorphic to $M_{12}$ (\cite[Sections 3 and 7]{Co1}).
The set $\L_\infty(\PG(2,3))$ is a subset of $\Sym(13)$ that contains $M_{12}$. It has size $13\cdot |M_{12}|$ 
and is equal to the product $\pi_\infty(\PG(2,3))\cdot \Aut(\PG(2,3))$. Since $\Aut(\PG(2,3))\cong \PSL(3,3)$, 
the set $M_{13}$ contains both $M_{12}$ and $\PSL(3,3)$ and, moreover, contains copies of these groups intersecting 
in exactly the manner that Conway proposed in the quote above.

\subsection{Some variants }\label{s: variants}

A number of variants of the $M_{13}$-puzzle are mentioned by Conway in his original article \cite{Co1}. Other 
variants were investigated further by Conway, Elkies and Martin \cite{CEM}, two of which relate to $\PG(2,3)$ and are particularly interesting:
\begin{enumerate}
 \item {\bf The signed game}. One defines moves on $\PG(2,3)$ as before, except that the definition of an 
elementary move also assigns a ``sign'' to each letter in the permutation. We write $[a,b]=(a,b)(\underline{c},
\underline{d})$ to denote that the letters $c$ and $d$ are given negative signs. The resulting hole-stabilizer, 
$\underline{\pi_\infty}(\PG(2,3)),$ may be regarded as a subgroup of the wreath product $\Z/2\Z\wr \Sym(12)$ 
and turns out to be isomorphic to $2M_{12}$, the double-cover of $M_{12}$, \cite[Theorem 3.5]{CEM}.

 \item{\bf The dualized game}. In this puzzle, the set $\Omega$ is the union of the point-set and the line-set 
of $\PG(2,3)$. We distinguish both a point $\infty$, and a line $\overline{\infty}$ such that $\infty$ and 
$\overline{\infty}$ are incident in $\PG(2,3)$. Since $\PG(2,3)$ is self-dual, one can define moves, as in 
the original game, for sequences of points as well as sequences of lines. Once one has done this, one can 
define a move of the form
 \[
  [p_1, q_1, p_2, q_2, \cdots, p_\ell, q_\ell] = [p_1,\dots, p_\ell]\cdot [q_1,\dots, q_\ell]
 \]
where $p_1,\dots, p_\ell$ are points $q_1,\dots, q_\ell$ are lines and we require that $q_i$ is incident 
with $p_i$ for all $i=1,\dots, \ell$, and $q_i$ is incident with $p_{i+1}$ for all $i=1,\dots, \ell-1$. 
One can define analogously a hole-stabilizer $\pi_\infty^d(\PG(2,3))$, except that here we require $p_1=
p_\ell=\infty$ and $q_1=q_\ell=\overline{\infty}$. The group $\pi_\infty^d(\PG(2,3))$ is isomorphic to 
$M_{12}$ and its action on $\Omega$ splits into two orbits: the point-set and line-set. By interchanging the point-set and line-set appropriately one can obtain a concrete representation of the outer automorphism 
of $M_{12}$, \cite[Section 4]{CEM}.
\end{enumerate}

\subsection{Multiple transitivity}

The groups $M_{12}, \Sym(5), \Sym(6)$ and $\Alt(7)$ are the only finite permutation 
groups which are  sharply 5-transitive, that is, they are transitive on ordered $5$-tuples of distinct points 
and only the identity fixes such a $5$-tuple.  Moreover, if $G$ is a sharply $k$-transitive group with 
$k\geq 6$, then $G=\Sym(k), \Sym(k+1)$ or $Alt(k+2)$, (see \cite[Theorem 7.6A]{permutation}). 
The set $M_{13}$, however, having size $13\cdot |M_{12}|$, seemed a good candidate to be a ``sharply $6$-transitive 
subset of permutations'', and to clarify the meaning of this phrase, Conway, Elkies and Martin introduced the 
following notions in \cite[Section 5.1]{CEM}. Here $\mathcal{P}$ denotes the set of all ordered $6$-tuples of 
distinct points of $\PG(2,3)$.

\begin{itemize}
 \item A tuple ${\bf p}\in\mathcal{P}$ is a \emph{universal donor} if, for all 
 ${\bf q}\in\mathcal{P}$, there exists $g\in M_{13}$ such that ${\bf p}^g={\bf q}$.

 \item A tuple ${\bf q}\in\mathcal{P}$ is a \emph{universal recipient} if, for all ${\bf p}\in\mathcal{P}$, 
there exists $g\in M_{13}$ such that ${\bf p}^g={\bf q}$.
\end{itemize}
They proved the following result \cite[Theorems 5.2 and 5.3]{CEM} which gives full information regarding 
the sense in which $M_{13}$ is a sharply 6-transitive subset.

\begin{Thm}\label{t: 6t}
 \begin{enumerate}
  \item A tuple ${\bf p}=(p_1,\dots, p_6)\in\mathcal{P}$  
  is a universal donor if and only if $p_i=\infty$ for some $i$.
  \item A tuple ${\bf q}\in\mathcal{P}$ is a universal recipient if and only if ${\bf q}$ contains a line of $\PG(2,3)$. 
 \end{enumerate}
\end{Thm}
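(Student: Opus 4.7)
The overall plan is to recast both parts as orbit-intersection statements using the factorisation $M_{13}=\pi_\infty(\PG(2,3))\cdot\Aut(\PG(2,3))$. Writing $O_{\bf p}$ for the $\pi_\infty$-orbit of ${\bf p}$ in $\mathcal{P}$ and $A_{\bf q}$ for the $\Aut(\PG(2,3))$-orbit of ${\bf q}$, a direct manipulation gives ${\bf p}\cdot M_{13}=O_{\bf p}\cdot\Aut(\PG(2,3))$. Since $|M_{13}|=|\mathcal{P}|$, the tuple ${\bf p}$ is a universal donor if and only if $O_{\bf p}$ meets every $\Aut$-orbit on $\mathcal{P}$, and dually ${\bf q}$ is a universal recipient if and only if $A_{\bf q}$ meets every $\pi_\infty$-orbit. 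Sharp $5$-transitivity of $\pi_\infty\cong M_{12}$ on $\Omega\setminus\{\infty\}$ yields exactly thirteen $\pi_\infty$-orbits on $\mathcal{P}$: six orbits of tuples containing $\infty$ (one for each coordinate position) and seven further orbits of tuples supported in $(\Omega\setminus\{\infty\})^6$.

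For part~(1) the forward direction drops out immediately from this framework: if $p_i=\infty$ then $O_{\bf p}$ is the whole set of tuples with $\infty$ in position $i$, and every $\Aut$-orbit meets this set because $\Aut(\PG(2,3))$ is transitive on $\Omega$. For the converse I would assume ${\bf p}\subseteq\Omega\setminus\{\infty\}$ and produce a non-identity $g\in M_{13}$ fixing ${\bf p}$ pointwise; this forces the map $g\mapsto {\bf p}\cdot g$ to have a non-trivial fibre and hence to miss some ${\bf q}\in\mathcal{P}$. The four lines of $\PG(2,3)$ through $\infty$ partition $\Omega\setminus\{\infty\}$ into four $3$-sets, and whenever ${\bf p}$ is disjoint from one of them the elementary move $[\infty,a]$, for $a$ in the missed $3$-set, already lies in $M_{13}$ and fixes ${\bf p}$. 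The two remaining intersection patterns $(3,1,1,1)$ and $(2,2,1,1)$, in which ${\bf p}$ meets every line through $\infty$, require building a longer word $[\infty,a_1,\dots,a_\ell,\infty]\in\pi_\infty$ whose cycle support avoids ${\bf p}$, using the extra rigidity of the Steiner system $S(5,6,12)$ preserved by $\pi_\infty$.

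For part~(2) the same framework reduces the task to showing that $A_{\bf q}$ meets each of the seven non-$\infty$ $\pi_\infty$-orbits (the six $\infty$-orbits being automatic by $\Aut$-transitivity on $\Omega$). The genuine obstacle, and the step I expect to do most of the work, is the combinatorial dictionary required here: the seven non-$\infty$ $\pi_\infty$-orbits are indexed by $M_{12}$-invariants of the underlying $6$-subset relative to the Steiner system $S(5,6,12)$, while the $\Aut$-orbits are indexed by the $\PG(2,3)$-incidence type of the six points. One then has to verify orbit-by-orbit that $A_{\bf q}$ meets every $\pi_\infty$-orbit exactly when ${\bf q}$ contains a $\PG(2,3)$-line. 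This matching is essentially the combinatorial content of Conway's identification of the puzzle-hole-stabiliser with $M_{12}$, and once it is in place both parts of the theorem follow from the orbit framework set up at the start.
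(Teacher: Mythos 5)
First, note that the paper does not prove this theorem: it is quoted from \cite[Theorems 5.2 and 5.3]{CEM}, so there is no in-paper argument to compare against. Judged on its own terms, your orbit-intersection framework is sound: since $M_{13}=\pi_\infty\cdot\Aut(\PG(2,3))$ and $|M_{13}|=|\mathcal{P}|$, the reductions to ``$O_{\bf p}$ meets every $\Aut$-orbit'' and ``$A_{\bf q}$ meets every $\pi_\infty$-orbit'' are correct, as are the easy halves (the forward direction of (1), the six $\infty$-orbits in (2), and the case of (1) where ${\bf p}$ misses one of the four punctured lines through $\infty$). But there is a concrete error in your treatment of the remaining cases of (1): you propose to find a non-identity word $[\infty,a_1,\dots,a_\ell,\infty]\in\pi_\infty$ whose support avoids ${\bf p}$. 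No such element exists. Since $\pi_\infty\cong M_{12}$ acts sharply $5$-transitively on the $12$ points of $\Omega\setminus\{\infty\}$, a non-identity element fixes at most $4$ of them (its minimal degree is $8$), so it cannot fix the six points of ${\bf p}$. Any witness to a non-trivial fibre over ${\bf p}$ must move $\infty$, i.e.\ be a word $[\infty,a_1,\dots,a_\ell]$ with $a_\ell=a\notin{\bf p}\cup\{\infty\}$ fixing ${\bf p}$ pointwise; equivalently, ${\bf p}$ and ${\bf p}^{g_a^{-1}}$ must lie in the same $\pi_\infty$-orbit for some such $a$. Establishing this for the intersection patterns $(3,1,1,1)$ and $(2,2,1,1)$ is exactly the hard content of the ``only if'' direction, and your proposal does not supply it.

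Part (2) is likewise not proved: the entire ``combinatorial dictionary'' matching $\Aut$-orbits against $\pi_\infty$-orbits is deferred, and your indexing of the seven non-$\infty$ orbits is off. Those orbits are \emph{not} determined by $M_{12}$-invariants of the underlying $6$-subset: $M_{12}$ has only two orbits on $6$-subsets of the $12$ points (the $132$ hexads of $S(5,6,12)$ and the $792$ non-hexads), and the seven orbits on ordered tuples split as one orbit of hexad-orderings plus six orbits of non-hexad-orderings distinguished by the ordering relative to the setwise stabiliser. Any orbit-by-orbit verification would have to work at the level of ordered tuples, and would also need the ``only if'' direction (a tuple containing no line fails to reach some orbit), which is nowhere addressed. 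In short, the skeleton is a legitimate way to organise a proof, but both converse implications --- the parts carrying the real content of the theorem --- are missing, and one of the two proposed repair mechanisms is impossible as stated.
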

An alternative approach to the study of multiply-transitive sets of permutations was proposed and studied by Martin and Sagan \cite{MS}; earlier studies in the wider context of ``sharp subsets'' of permutations are discussed in \cite{Cam88}.
The problem of computing the transitivity of $M_{13}$ in the sense of Martin and Sagan was completed by Nakashima \cite{Nak}.
Sharp $k$-transitivity of subsets of permutations was also studied by Bonisoli and Quattrocchi \cite{BQ}. Their result 
is very strong although it applies only to so-called ``invertible'' sets (and $M_{13}$ is not invertible). 

\section{A more general setting for groupoids}\label{s: general}

For the rest of this paper we turn our attention to work inspired by Conway's construction of $M_{13}$, and which seeks to generalize it in various ways. 
This more general setting was first considered in \cite{GGNS}; it involves the notion of a \emph{$4$-hypergraph}, namely a pair $\De:=(\Omega,\B)$, where $\Omega$ is a finite 
set of size $n$, and $\B$ is a finite multiset of subsets of $\Omega$ (called lines), each of size $4$. Observe 
that $\PG(2,3)$ is a $4$-hypergraph on a set of size $13$. A pair $a, b$ of (not necessarily distinct) points is called 
\emph{collinear} if $a, b$ are contained in some line of $\De$, and $\De$ is said to be \emph{connected} if, for all 
$a, b\in\Omega$, there exists a finite sequence $a_0=a, a_1,\dots,a_k=b$ of points from $\Omega$ such that
each pair $a_{i-1}, a_i$ is collinear.

Consider an arbitrary connected $4$-hypergraph $\De=(\Omega, \B)$. For a pair of distinct collinear points $a,b\in \Omega$ 
we define the \emph{elementary move}, denoted $[a,b]$, to be the permutation 
\[
(a,b)(c_1,d_1)(c_2,d_2)\cdots(c_\lambda,d_\lambda)
\]
where $\{a,b,c_i,d_i\}$ (for $i=1,\dots, \lambda$) are the lines of $\De$  containing $a$ and $b$. The
value of $\lambda$ in general depends on $a$ and $b$. To ensure that each elementary move is well defined, 
the $4$-hypergraph $\De$ is required to be \emph{pliable}, that is, whenever two lines have at least three 
points in common, the two lines contain exactly the same points. 

The rest of the set-up proceeds {\it \'a la} the analysis of $M_{13}$ given at the start of \S\ref{puzzle}: We define the move $[a,a]$ to be the identity permutation, for each $a$,  and for a sequence $a_1, a_2,\dots,a_\ell$ such 
that each pair $a_{i-1}, a_i$ is collinear, we define the move  $[a_1,\dots, a_n]$ exactly as in \eqref{eq:move}. Finally, we distinguish a point of $\De$ which we call $\infty$, and we define the hole-stabilizer $\pi_\infty(\De)$ and the set $\L_\infty(\De)$ as in \eqref{eq:LPi1}. 
The set $\pi_\infty(\De)$ is again a subgroup of $\Sym(\Omega\setminus\{\infty\})$, and the subset $\L_\infty(\De)$ 
of $\Sym(\Omega)$ is an analogue of Conway's $M_{13}$. 

In \cite{Co1}, Conway recognised that $M_{13}$ could be endowed with the structure of a \emph{groupoid} (that is,
\ a small category in which all morphisms are isomorphisms). The set $M_{13}$ is sometimes referred to as the 
\emph{Mathieu groupoid}. We define an analogue of this notion in this more general setting, and explain the 
connection between the set and the category.

For a pliable, connected $4$-hypergraph $\De=(\Omega, \B)$, the \textit{Conway groupoid} $\C(\De)$ is the small category 
whose object set is $\Omega$, such that, for $a,b\in \Omega,$ the set $\Mor(a,b)$ of morphisms from $a$ to $b$ is precisely
$$
\Mor(a,b) :=\{[a,a_1,\ldots,a_{k-1},b] \mid a_{i-1}, a_i\in\Omega \mbox{ for } 1 \leq i \leq k-1\} .
$$ 
Since $\De$ is connected, there exists a finite sequence $\infty=b_0, b_1,\ldots,b_{\ell} = a$ such that 
each pair $b_{i-1}, b_i$ is collinear. Hence $\rho := [\infty, b_{1},\dots,b_{\ell-1}, a]  \in \L_\infty(\De)$.
Moreover, for each $b\in\Omega$ and each $\sigma = [a,a_1,\ldots,a_{k-1},b] \in \Mor(a,b)$, we also have
$\tau := [\infty,   b_{1},\dots,b_{\ell-1}, a, a_1,\ldots,a_{k-1},b] \in \L_\infty(\De)$, and  
$\sigma= \rho^{-1} \cdot \tau$. In particular, the category $\C(\De)$ is completely determined 
by the set $\L_\infty(\De)$. Thus, just as the term \emph{Mathieu groupoid} is applied in the literature 
to both $\C(\PG(2,3))$ and $\L_\infty(\PG(2,3))$, so also the term \emph{Conway groupoid} is used for both 
$\C(\De)$ and $\L_\infty(\De)$ (although we tend to focus on the latter).
The following result, which follows from \cite[Lemma 3.1]{GGNS}, is relevant.

\begin{Lem}\label{l: iso}\cite{GGNS}
Let $\De$ be a pliable $4$-hypergraph for which each pair of points is collinear.
Let $\infty_1, \infty_2$ be points of $\De$. Then $\pi_{\infty_1}(\De)\cong \pi_{\infty_2}(\De)$ (as permutation groups).
\end{Lem}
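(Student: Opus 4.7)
The plan is to exhibit a conjugating element inside $\Sym(\Omega)$ that carries $\pi_{\infty_1}(\De)$ to $\pi_{\infty_2}(\De)$ and also realises the required bijection of underlying sets. The hypothesis that every pair of points is collinear makes the natural candidate available: set $g := [\infty_1, \infty_2]$, the elementary move on the line(s) through $\infty_1$ and $\infty_2$. Pliability guarantees $g$ is a well-defined involution in $\Sym(\Omega)$, and by construction $g$ swaps $\infty_1$ and $\infty_2$, so its restriction yields a bijection $\phi : \Omega \setminus\{\infty_1\} \to \Omega\setminus\{\infty_2\}$.

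Next I would show that conjugation by $g$ sends $\pi_{\infty_1}(\De)$ into $\pi_{\infty_2}(\De)$. A typical element of $\pi_{\infty_1}(\De)$ has the form $h = [\infty_1, a_1, \dots, a_{k-1}, \infty_1]$. Using the defining identity
\[
 [a_1,\dots, a_\ell] = [a_1,a_2]\cdot [a_2,a_3] \cdots [a_{\ell-1}, a_\ell]
\]
and the fact that $g = [\infty_2,\infty_1] = [\infty_1,\infty_2]$ is an involution, one computes
\[
 g \, h \, g^{-1} \;=\; [\infty_2, \infty_1] \cdot [\infty_1, a_1, \dots, a_{k-1}, \infty_1] \cdot [\infty_1, \infty_2] \;=\; [\infty_2, \infty_1, a_1, \dots, a_{k-1}, \infty_1, \infty_2],
\]
which lies in $\pi_{\infty_2}(\De)$ by definition. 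Thus $\psi : h \mapsto g h g^{-1}$ is a well-defined group homomorphism $\pi_{\infty_1}(\De) \to \pi_{\infty_2}(\De)$; by the symmetric argument (swapping the roles of $\infty_1$ and $\infty_2$) it has a two-sided inverse, so it is a group isomorphism.

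Finally, I would observe that $\psi$ and $\phi$ together give an isomorphism of permutation groups. Indeed, for any $h \in \pi_{\infty_1}(\De)$ and any $x \in \Omega \setminus \{\infty_1\}$,
\[
 \psi(h)\bigl(\phi(x)\bigr) \;=\; (g h g^{-1})(g(x)) \;=\; g(h(x)) \;=\; \phi\bigl(h(x)\bigr),
\]
so the actions are intertwined, as required.

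The only conceptual obstacle is the bookkeeping that turns the conjugate $g h g^{-1}$ into an actual move starting and ending at $\infty_2$; once one writes $h$ out as a product of elementary moves, this collapses to concatenation of letter sequences. Everything else is formal, and the hypotheses (connectivity upgraded to full pairwise collinearity, plus pliability so that elementary moves are unambiguous) are used precisely to guarantee that $g = [\infty_1, \infty_2]$ exists and behaves as an involution.
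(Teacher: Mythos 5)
Your proof is correct and follows essentially the same route as the cited argument (and the concatenation manipulations the paper itself performs when relating $\C(\De)$ to $\L_\infty(\De)$): conjugate by the elementary move $g=[\infty_1,\infty_2]$, whose existence is exactly what the pairwise-collinearity hypothesis buys, and observe that conjugation concatenates move sequences so that $\pi_{\infty_1}(\De)^g=\pi_{\infty_2}(\De)$ while $g$ restricts to the required bijection $\Omega\setminus\{\infty_1\}\to\Omega\setminus\{\infty_2\}$. The same argument with $g$ replaced by an arbitrary move $[\infty_1,b_1,\dots,b_{\ell-1},\infty_2]$ gives the strengthening for connected hypergraphs mentioned after the lemma.
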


This lemma can be strengthened so that we obtain the same conclusion, supposing only that $\De$ is a 
pliable, connected $4$-hypergraph. The lemma allows us to talk about ``the'' hole stabilizer of such a 
hypergraph without having to specify the base point $\infty$. A similar statement also holds for the sets $\L_{\infty_1}(\De)$ and $\L_{\infty_2}(\De)$, allowing us to talk about ``the'' hole stabilizer of a pliable, connected $4$-hypergraph.

\subsection{Some examples}\label{s: examples}

For this section, we need some definitions: for positive integers, $n,k,\lambda$, a \emph{$2-(n,k,\lambda)$-design} 
$(\Omega, \B)$ consists of a set $\Omega$ of ``points'' of size $n$, and a multiset $\B$ of $k$-element subsets of $\Omega$ 
(called ``lines'') such that any $2$-subset of $\Omega$ lies in exactly $\lambda$ lines.   
The design $(\Omega, \B)$ is called \emph{simple} if there are no repeated lines (that is,\ $\B$ is a set, 
rather than a multiset). If $k=4$ and $(\Omega, \B)$ is a simple $2-(n,4,\lambda)$-design, then $(\Omega, \B)$ 
is a connected $4$-hypergraph.  Further, if in addition $(\Omega, \B)$ is pliable, that is, if distinct lines 
intersect in a set of size at most $2$, then $(\Omega, \B)$ is called \emph{supersimple}. 

The search for examples of interesting new Conway groupoids, which we report on, has focussed almost exclusively 
on the situation where the $4$-hypergraph $\De=(\Omega, \B)$ is a supersimple $2-(n,4,\lambda)$ design. 
In particular, Lemma~\ref{l: iso} applies and the isomorphism class of the hole-stabilizer is, up to permutation isomorphism,  independent of the choice of the point $\infty$.

Let us first consider a somewhat degenerate case: it turns out that $\pi_\infty(\De)=\Alt(\Omega\setminus\{\infty\})$ if and only if $\L_\infty(\De)=\Alt(\Omega)$, and  $\pi_\infty(\De)=\Sym(\Omega\setminus\{\infty\})$ if and only if $\L_\infty(\De)=\Sym(\Omega)$. In 
these cases the puzzle-construction sheds no new light on the groups in question, and can be safely ignored. 
It turns out that for very many of the supersimple $2-(n,k,\lambda)$ designs $\De$ examined, the corresponding 
Conway groupoid is of this type. Indeed it turns out that if $n$ is sufficiently large relative to $\lambda$, 
then $\L_\infty(\De)$ always contains $\Alt(\Omega)$.

\begin{Lem}\label{lem:nlambda}\cite[Theorem E(3)]{GGS}\quad
If $\De=(\Omega, \B)$ is a supersimple $2-(n,4,\lambda)$ design with $n>144 \lambda^2 + 120 \lambda +26$, 
then $\L_\infty(\De)\supseteq\Alt(\Omega)$.
\end{Lem}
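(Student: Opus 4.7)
The plan is to first show that the hole-stabilizer $\pi_\infty(\De)\leq\Sym(\Omega\setminus\{\infty\})$ contains $\Alt(\Omega\setminus\{\infty\})$, and then promote this conclusion to $\L_\infty(\De)\supseteq\Alt(\Omega)$. The first step follows the classical ``primitive group with a non-identity element of small support implies $\Alt$'' pattern.

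\textbf{A short element in $\pi_\infty(\De)$.} Since $\De$ is a $2$-design every pair of points is collinear, so for any $a,b\in\Omega\setminus\{\infty\}$ the move $\sigma_{a,b}:=[\infty,a,b,\infty]$ lies in $\pi_\infty(\De)$, and a direct computation shows it sends $a\mapsto b$ provided $\infty,a,b$ are not collinear (easily arranged for large $n$); this already delivers transitivity of $\pi_\infty(\De)$. Supersimplicity now provides the key numerical input: the $\lambda$ lines through any pair $\{x,y\}$ pairwise meet only in $\{x,y\}$, so each elementary move $[x,y]$ is a product of $1+\lambda$ disjoint transpositions with $|\supp([x,y])|=2+2\lambda$. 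Consequently $\sigma_{a,b}$, being a product of three elementary moves, has support of size at most $6+6\lambda$; after removing the fixed point $\infty$ and the overlaps at $a$ and $b$, and choosing $a,b$ ``generically'' so that the line-configurations through $\{\infty,a\},\{a,b\},\{b,\infty\}$ do not coincide (possible because $n$ is so large), this refines to $|\supp(\sigma_{a,b})|\leq 6\lambda+2$. Moreover $\sigma_{a,b}\neq 1$ since it transposes $a$ and $b$.

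\textbf{Bochert/Babai minimal-degree bound.} Once primitivity of $\pi_\infty(\De)$ on $\Omega\setminus\{\infty\}$ is established --- which I expect to follow from a block-analysis argument leveraging the richness of the line system through any point-pair --- the classical minimal-degree estimate for primitive groups, namely that a primitive subgroup of $\Sym_{n-1}$ with minimal degree $m$ satisfies $n-1\leq(2m+1)^2$ unless it contains $\Alt_{n-1}$, applied with $m\leq 6\lambda+2$ gives $n-1\leq(12\lambda+5)^2=144\lambda^2+120\lambda+25$. The hypothesis $n>144\lambda^2+120\lambda+26$ violates this, forcing $\pi_\infty(\De)\supseteq\Alt(\Omega\setminus\{\infty\})$.

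\textbf{Bootstrap to $\L_\infty(\De)$.} The set $\L_\infty(\De)$ partitions as $\bigsqcup_{a\in\Omega}\pi_\infty(\De)\cdot\rho_a$, where $\rho_a$ is any fixed move from $\infty$ to $a$. Choosing $\rho_a:=[\infty,b,a]$ for some $b\neq a,\infty$ (existing thanks to the $2$-design property) gives a product of two elementary moves, hence $\rho_a\in\Alt(\Omega)$ regardless of the parity of $\lambda$. Combined with $\pi_\infty(\De)\supseteq\Alt(\Omega\setminus\{\infty\})$ and a coset-counting argument ($|\Alt(\Omega\setminus\{\infty\})\cdot\rho_a|=(n-1)!/2=|\{\tau\in\Alt(\Omega):\tau(\infty)=a\}|$), each piece $\pi_\infty(\De)\cdot\rho_a$ contains the entire subset $\{\tau\in\Alt(\Omega):\tau(\infty)=a\}$, and the union over $a$ yields $\Alt(\Omega)\subseteq\L_\infty(\De)$. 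The main obstacle is establishing primitivity of $\pi_\infty(\De)$: transitivity is free from the $2$-design property, the support computation is an elementary consequence of supersimplicity, and the bootstrap uses only coset bookkeeping --- but ruling out nontrivial block systems for $\pi_\infty(\De)$ without prior structural knowledge is the delicate part, and is where further combinatorial input from $\De$ (together with the largeness of $n$) is most likely to be needed.
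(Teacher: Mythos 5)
Your proposal follows essentially the same route that the paper sketches in Remark~\ref{rem:n and lambda}(c): produce a non-trivial element $[\infty,a,b,\infty]$ of $\pi_\infty(\De)$ with support of size at most $6\lambda+2$, invoke primitivity, and apply Babai's minimal-degree bound for primitive groups not containing the alternating group; your numerics $(2(6\lambda+2)+1)^2=144\lambda^2+120\lambda+25$ match the stated threshold exactly, and your coset bootstrap from $\pi_\infty(\De)\supseteq\Alt(\Omega\setminus\{\infty\})$ to $\L_\infty(\De)\supseteq\Alt(\Omega)$ is correct (it is the equivalence the paper records at the start of \S\ref{s: examples}; note $[\infty,b,a]$ is even since each elementary move is a product of $\lambda+1$ transpositions, so a product of two of them is always even). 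Two small points: the bound $|\supp([\infty,a,b,\infty])|\leq 6\lambda+2$ needs no genericity at all --- the three points $\infty,a,b$ are each double-counted in the union of the three supports and $\infty$ is fixed by the composite, giving $3(2\lambda+2)-3-1=6\lambda+2$ unconditionally; the only place a choice is needed is non-triviality, for which picking $b$ off the $2\lambda$ points collinear with $\{\infty,a\}$ suffices, as you say.

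The one genuine gap is the one you flag yourself: primitivity of $\pi_\infty(\De)$ on $\Omega\setminus\{\infty\}$ is asserted but not proved, and Babai's bound is useless without it. In the paper this ingredient is exactly Theorem~\ref{t: n and lambda}(3) (primitivity already holds once $n>9\lambda+1$, far below the quadratic threshold), established in \cite{GGNS,GGS} by a counting argument on block systems; the survey describes it as elementary but it is a separate lemma, not a consequence of the hypotheses you have set up. Until you supply that block-system analysis (or cite it), the argument is not self-contained, though the overall architecture is the right one.
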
 

We shall have more to say about the relationship between $n$ and $\lambda$ in Theorem~\ref{t: n and lambda}. However 
this crude bound is sufficient to show, for example, that the Conway groupoids for the point-line designs of 
projective spaces $\PG(r,3)$ and affine spaces $\AG(r,4)$ contain $\Alt(\Omega)$ whenever $r\geq5$.
The first infinite family of examples without this property was studied in \cite{GGNS}.

\begin{Ex}\label{ex: boolean}
 The \textit{Boolean quadruple system of order $2^m$}, where $m\geq2$, is the design 
$\De^b=(\Omega^b,\B^b)$ such that $\Omega^b$ is identified with the set of vectors in $\mathbb{F}_2^m$, and 
 \[
 \B^b:=\{\{v_1,v_2,v_3,v_4\} \mid v_i \in \Omega^b \mbox{ and } \sum_{i=1}^4 v_i = \textbf{0}\}.
 \]
Equivalently, we can define 
\[
\B^b=\{v+W \mid v\in \Omega^b, W\leq \mathbb{F}_2^m, \dim(W)=2\};
\]
that is, $\B^b$ is the set of all affine planes of $\Omega^b$.
It is easy to see that $\De$ is both a $3$-$(2^m,4,1)$ Steiner quadruple system and a 
supersimple $2$-$(2^m,4,2^{m-1}-1)$ design.
It turns out that $\pi_\infty(\De^b)$ is trivial. In addition, the Conway groupoid $\L_\infty(\De^b)$ 
is equal to the group of translations $E_{2^m}$ acting transitively on $\Omega^b$ \cite[Theorem B, Section 5]{GGNS}.
\end{Ex}

The approach in the literature to finding new examples from supersimple designs has tended to be 
organised in terms of the behaviour of the hole stabilizer (as a subgroup of 
$\Sym(\Omega\setminus\{\infty\})$). For a Boolean quadruple system $\De^b$, the trivial group 
$\pi_\infty(\De^b)$ is clearly intransitive on the set $\Omega\setminus\{\infty\}$. However, 
examples with intransitive hole-stabilizers seem quite rare. We only know three other examples. 
They are given in \cite[Table 1]{GGNS}, and have parameters: 
 \begin{enumerate}
  \item $(n,\lambda)=(16,6)$ and $\pi_\infty(\De)\cong \Sym(3)\times \Sym(3) \times \Sym(3) \times \Sym(3) \times \Sym(3)$;
  \item $(n,\lambda)=(17,6)$ and $\pi_\infty(\De)\cong \Sym(8)\times \Sym(8)$;
  	\item $(n,\lambda)=(49,18)$ and $\pi_\infty(\De)\cong \Sym(24)\times \Sym(24)$.
 \end{enumerate}


\begin{Qu}\label{q: intrans}
Apart from Boolean quadruple systems, are there infinitely many supersimple designs for which the hole-stabilizers are intransitive?
\end{Qu}

We next turn our attention to the situation where the hole-stabilizers are transitive
on  $\Omega\setminus\{\infty\}$. Examples for which this action is imprimitive also 
seem to be rare. Only one example appears in  \cite[Table 1]{GGNS}: it has parameters
 $(n,\lambda)=(9,3)$ and $\pi_\infty(\De)\cong \Alt(4)\wr C_2$.


\begin{Qu}\label{q: imprim}
 Are there more examples of supersimple designs $\De$ for which $\pi_\infty(\De)$ is transitive and imprimitive?
\end{Qu}

In \cite{GGS} two infinite families of designs are studied for which the hole-stabilizers 
are primitive. To describe them we need the following set-up: Let $m \geq 2$ and $V:=(\mathbb{F}_2)^{2m}$ 
be a vector space equipped with the standard basis. Define 
\begin{equation}\label{e:form}
e:=\begin{pmatrix}0_m&I_m\\0_m&0_m\end{pmatrix}, \qquad f:=\begin{pmatrix}0_m&I_m\\I_m&0_m\end{pmatrix}=e+e^T,
\end{equation}
where $I_m$ and $0_m$ represent the $m \times m$ identity and zero matrices respectively. We write elements of $V$ as 
row vectors and define $\varphi(u,v)$ as the alternating bilinear form  $\varphi(u,v):=ufv^T$. We also set $\theta(u):=ueu^T \in \mathbb{F}_2$, so that 
$$
\theta(u+v)+\theta(u)+\theta(v)=\varphi(u,v).
$$ 
(Note that the right hand side equals $u e v^T + v e u^T$ while the left hand side is $u(e+e^T)v^T$.) 
Finally, for each $v \in V$ define $\theta_v(u):=\theta(u)+\varphi(u,v)$, and note that $\theta_0=\theta$.

\begin{Ex}\label{ex: symplectic}
The \textit{Symplectic quadruple system of order $2^{2m}$}, where $m\geq2$, is the design
$\De^a=(\Omega^a,\B^a)$, where $\Omega^a:=V$ and 
$$
\B^a:=\left\{\{v_1,v_2,v_3,v_4\} \mid v_1,v_2,v_3,v_4 \in \Omega^a, \sum_{i=1}^4 v_i=\textbf{0}, \sum_{i=1}^4 \theta(v_i)=0 \right\}.
$$ 
By \cite[Theorem B]{GGS} for $m\geq3$ and \cite[Table 1]{GGNS} for $m=2$, $\L_\infty(\De^a) \cong 2^{2m}.\Sp_{2m}(2)$, while $\pi_\infty(\De^a) \cong \Sp_{2m}(2)$. Indeed, taking $\infty$ to be the zero vector in $V$, $\pi_\infty(\De^a)={\rm Isom(V,\varphi)}$, the isometry group 
 of the formed space $(V, \varphi)$.
 \end{Ex}

\begin{Ex}\label{ex: orthogonal}
The \textit{Quadratic quadruple systems of order $2^{2m}$}, where $m\geq3$, are the 
designs $\De^\ep=(\Omega^\ep,\B^\ep)$, for $\ep \in \mathbb{F}_2$, such that 
$\Omega^\ep:=\{\theta_v  \mid v \in V, \theta(v)=\ep\}$ and  
$$
\B^\ep:=\left\{\{\theta_{v_1},\theta_{v_2},\theta_{v_3},\theta_{v_4}\} \mid 
\theta_{v_1},\theta_{v_2},\theta_{v_3},\theta_{v_4} \in \Omega^\ep, \sum_{i=1}^4 v_i= \textbf{0} \right\}.
$$
By \cite[Theorem B]{GGS}, $\L_\infty(\De^\ep) \cong \Sp_{2m}(2)$, the 
isometry group of $\varphi$, while $\pi_\infty(\De^\ep) \cong \rm{O}_{2m}^{\ep'}(2)$, 
where $\ep'=\pm$ and $\ep=(1-\ep'1)/2$ (as an integer in $\{0,1\})$.
\end{Ex}


We remark that the set of lines in $\De^a$ coincides with the set of translates 
of the totally isotropic $2$-subspaces of $\Omega^a$. This alternative interpretation 
provides a link with Example~\ref{ex: boolean}. The designs in Example~\ref{ex: orthogonal} 
can be rephrased similarly (see \cite[\S6]{GGPS}). Note, too, that Example~\ref{ex: orthogonal} 
can be extended to include the case $m=2$, but only for $\ep = 0$. In this case, \cite[Table 1]{GGNS} 
asserts that $\L_\infty(\De^0) \cong \Sym(6)$ and $\pi_\infty(\De^0) \cong O^+_4(2)=\Sym(3)\wr\Sym(2)$.

The examples described thus far represent all those known for which $\De$ is a supersimple design. Note that in Examples~\ref{ex: boolean}, \ref{ex: symplectic} and \ref{ex: orthogonal}, the Conway groupoid $\L_\infty(\De)$ is always a group -- we shall have more to say on this phenomenon in \S\ref{s: two graphs}.

\section{Conway groupoids and codes}\label{s: codes}

In this section we consider certain codes that arise naturally from the supersimple designs 
initially used to define Conway groupoids. We use the following terminology from coding theory. 
A \emph{code} of length $m$ over an alphabet $Q$ of size $q$ is a subset of  
vertices of the Hamming graph $H(m,q)$, which is the graph $\Gamma$ with vertex set 
$V(\Gamma)$ consisting of all $m$-tuples with entries from $Q$, and such that two 
vertices are adjacent if they differ in 
precisely one entry. Consequently, the (Hamming) distance $d(\alpha,\beta)$ between two vertices 
$\alpha,\beta\in V(\Gamma)$ is equal to the number of entries in which they differ. If $Q$ is 
a finite field $\mF_q$, then we identify $V(\Gamma)$ with the space $\mF_q^m$ of $m$-dimensional
row vectors. In this case a code is called \emph{linear} if it is a subspace of $\mF_q^m$. 
We only consider linear codes in this section.

The \emph{support} of a vertex $\alpha=(\alpha_1,\ldots,\alpha_m)\in V(\Gamma)$ is the set 
$\supp(\alpha)=\{i\,|\,\alpha_i\neq 0\}$, and the \emph{weight} of $\alpha$ is
$\wt(\alpha)=|\supp(\alpha)|$.
Given a code $C$ in $H(m,q)$, the \emph{minimum distance} of $C$ is the minimum of $d(\alpha,\beta)$ 
for distinct codewords $\alpha,\beta\in C$, and for
a vertex $\beta\in V(\Gamma)$, the distance from $\beta$ to $C$ is defined as 
$$
d(\beta,C)=\min\{d(\beta,\alpha)\,|\,\alpha\in C\}.
$$ 
The \emph{covering radius $\rho$ of $C$} is 
the maximum of these distances:
$$
\rho=\max\{d(\beta,C)\,|\,\beta\in V(\Gamma)\}.
$$ 
For $i=0\ldots,\rho$ we let 
$$
C_i=\{\beta\,|\,d(\beta,C)=i\},
$$ 
so $C_0=C$, and 
we call the partition $\{C,C_1,\ldots,C_\rho\}$ of $V(\Gamma)$ the \emph{distance partition of $C$}.

A code is \emph{completely regular} if its distance partition is \emph{equitable}, 
that is, if the number of vertices in $C_j$ adjacent
to a vertex in $C_i$ depends only on $i,j$ and not on the choice of the vertex (for 
all $i,j$). Such codes have a high degree of combinatorial symmetry, and have been 
studied extensively (see, for example, \cite{distreg, delsarte, neum}
and more recently \cite{nonantipodal, rho=2, nested, binctrarb,kronprod,lifting}). Additionally, certain distance regular 
graphs can be described as coset graphs of completely regular codes \cite[p.353]{distreg}, and so 
such codes are also of interest to graph theorists. \emph{Completely transitive codes}, which are a subfamily of
completely regular codes with a high degree of algebraic symmetry, have also been studied (see \cite{BRZ,giudici,sole} for example).

For linear codes, the \emph{degree $s$} of a code is the number of values that occur as weights of
non-zero codewords. The \emph{dual degree $s^*$} of a linear code $C$ is the degree of 
its dual code $C^\perp$, where $C^\perp$ consists of all $\beta\in V(\Gamma)$ such that the dot product
$\alpha\cdot\beta := \sum_i \alpha_i\beta_i$ is equal to zero, for all codewords $\alpha\in C$.
The covering radius $\rho$ of a code $C$ is at most $s^*$, and $\rho=s^*$ if and only if $C$ is 
\emph{uniformly packed (in the wide sense)} \cite{remupc}. 
Completely regular codes are necessarily uniformly packed \cite{distreg}, but only a few constructions
of uniformly packed codes that are not completely regular are known \cite{kronprod}.

\subsection{The ternary Golay code} 
For a hypergraph or design $\De =(\Omega, \B)$, its \emph{incidence matrix} 
is the matrix whose columns are indexed by the points of $\Omega$, whose rows are indexed by 
the lines of $\B$, and such that the $(a,\ell)$-entry is 1 if the point $a$ lies in the line $\ell$,
and is zero otherwise. The row vectors are therefore binary $n$-tuples, where $n=|\Omega|$, and 
we may interpret their entries as elements of any field. For a field $F$ of order $q$, 
the code $C_{F}(\De)$  is defined as the linear span over $F$ 
of the rows of the incidence matrix of $\De$. It is contained in the Hamming graph $H(n,q)$.

In \cite{CEM}, the authors considered the code $C_{\mF_3}(\PG(2,3))$.  
They also constructed certain subcodes of this code, proving that one was 
the \emph{ternary Golay code}. Note that the ternary Golay code which Conway et al.~refer 
to is usually called the extended ternary Golay code in the coding theory literature. 
It is a $[12,6,6]_3$ code, which when punctured gives the perfect $[11,6,5]_3$ Golay code. 
We now describe their construction. 

Let $\mathcal{C} = C_{\mF_3}(\PG(2,3))$, and let $p\in \mathcal{P}$, the point set of $\PG(2,3)$. Conway et al.~define
$$
\mathcal{C}_p=\{\alpha\in \mathcal{C}\,|\,\alpha_p=-\sum_{i\in\mathcal{P}}\alpha_i\},
$$
and prove that the restriction of $\mathcal{C}_p$ to the coordinates 
$\mathcal{P}\backslash\{p\}$ is isomorphic
to the $[12,6,6]_3$ ternary Golary code \cite[Prop. 3.2]{CEM}, which has automorphism group $M_{12}$. 
It is this fact that is used by Conway et al.~ to prove that
$\pi_\infty(\PG(2,3))\cong M_{12}$, \cite[Thm. 3.5]{CEM}.

We now show that the full code $\mathcal{C}$ also has interesting properties which, to our knowledge,
have not been observed previously.

\begin{Thm}\label{thm:code1}
$\mathcal{C}$ is uniformly packed (in the wide sense), but not completely regular.
\end{Thm}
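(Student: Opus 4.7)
The plan is to verify the two claims separately: uniform packing, which amounts to showing $\rho = s^*$, and the failure of equitability of the distance partition.

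First I would pin down the basic invariants of $\mathcal{C}$. Standard formulas for the $3$-rank of the incidence matrix of $\PG(2,3)$ give $\dim_{\mF_3}\mathcal{C} = 7$, so $\mathcal{C}$ is a $[13,7,4]_3$ code, with the minimum weight realised precisely by scalar multiples of line indicator vectors, and with $\mathcal{C}^\perp$ a $[13,6]_3$ code. Next I would compute the weight enumerators of $\mathcal{C}$ and $\mathcal{C}^\perp$. Using the action of $\PSL_3(3)$ on $\PG(2,3)$, the codewords of $\mathcal{C}$ fall into a small number of orbits (scalar multiples of line indicators, sums and differences of pairs of incident lines, the all-ones vector, and so on); combined with MacWilliams duality this pins down $s^*$ as the number of nonzero weights appearing in $\mathcal{C}^\perp$.

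To establish uniform packing, since $\rho \leq s^*$ holds for every linear code it suffices to exhibit a coset of $\mathcal{C}$ whose minimum weight equals $s^*$. I would enumerate coset representatives under the induced action of $\PSL_3(3)$ on $\mF_3^{13}/\mathcal{C}$ and show that at least one orbit realises this value. This gives $\rho = s^*$, so $\mathcal{C}$ is uniformly packed in the wide sense.

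For the failure of complete regularity, I would exhibit two vertices $x,y$ at the same distance $i$ from $\mathcal{C}$ for which the number of Hamming neighbors of $x$ lying in $C_{i-1}$ differs from the corresponding count for $y$. A natural place to look is among vertices realising the covering radius, since these typically split into several $\PSL_3(3)$-orbits with distinct combinatorial structure; producing one such pair of witnesses suffices to show that the partition $\{\mathcal{C}, C_1, \ldots, C_\rho\}$ is not equitable.

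The main obstacle is the bookkeeping needed for the weight enumerators and coset analysis. However, since the automorphism group $\PSL_3(3)$ (of order $5616$) partitions both $\mathcal{C}$ and $\mF_3^{13}$ into very few orbits, the problem reduces to checking a handful of representatives, which is tractable either by hand or by a brief computer algebra verification.
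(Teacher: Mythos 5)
Your two-part strategy (show $\rho=s^*$, then break equitability) is the right one and would succeed, but it routes both halves through computations that the structure of $\PG(2,3)$ lets you avoid entirely; the paper's proof is a good illustration of this. For uniform packing you propose computing the weight enumerators of $\mathcal{C}$ and $\mathcal{C}^\perp$ to pin down $s^*$ and then searching cosets for one of minimum weight $s^*$. Instead, Lemma~\ref{lem:conway1} already gives that $\mathcal{C}^\perp=\mathcal{C}'$ has all weights divisible by $3$ and minimum distance $6$, so in length $13$ the only possible nonzero dual weights are $6,9,12$ and $s^*\leq 3$ with no enumeration at all; since $\mathcal{C}$ has minimum distance $4$ we get $\rho\geq 2$, and a single explicit vertex closes the sandwich: take $\nu$ of weight $3$ supported on a point $1$ and one further point on each of two distinct lines through $1$. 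No line of $\PG(2,3)$ meets $\supp(\nu)$ in more than two points, so $d(\nu,\pm h_\ell)\geq 3$ for every line $\ell$, and every other nonzero codeword has weight at least $6$; hence $d(\nu,\mathcal{C})\geq 3$ and $\rho=s^*=3$. For the failure of complete regularity your plan is to hunt for a non-equitable pair of vertices, with the covering-radius cells as the suggested hunting ground; this is a genuinely different (and unexecuted) route from the paper, which instead invokes the necessary condition that the minimum-weight codewords of a linear completely regular code of minimum distance $\delta$ form a $q$-ary $\lfloor\delta/2\rfloor$-design \cite[Theorem 2.4.7]{vantilborg}. Since the weight-$4$ codewords are exactly the vectors $\pm h_\ell$, a weight-$2$ vertex with two \emph{equal} nonzero entries is covered by exactly one of them while a weight-$2$ vertex with \emph{distinct} nonzero entries is covered by none, so they cannot form a $2$-design --- a one-line contradiction. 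If you do pursue your direct search, note that the asymmetry surfaces already at distance $2$ (these two types of weight-$2$ vertices both lie in $C_2$ but have different numbers of nearest codewords), not necessarily at the covering radius, so your proposed starting point may send you looking in the wrong cell; and as written your argument is a plan rather than a proof until the witnesses are actually produced.
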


In order to prove this, we consider the following subcode of $\mathcal{C}$, which Conway 
et al.~ use to determine certain properties of $\mathcal{C}$:
$$
\mathcal{C}'=\{\alpha\in\mathcal{C}\,|\,\sum\alpha_i=0\}.
$$ 
For a line $\ell$ of $\PG(2,3)$, let $h_\ell$ denote the weight $4$ vector in $H(13,3)$ with 
$i$-entry equal to $1$ if $i\in\ell$, and zero otherwise.

\begin{Lem}\cite[Prop. 3.1]{CEM} \label{lem:conway1} Let $\alpha\in\mathcal{C}$. Then
\begin{itemize}
\item[i)] $\wt(\alpha)\equiv 0$ or $1\pmod 3$;
\item[ii)] $\alpha\in\mathcal{C}'$ if and only if $\wt(\alpha)\equiv 0\pmod 3$;
\item[iii)] $\mathcal{C}^\perp=\mathcal{C}'$;
\item[iv)] $\mathcal{C}$ and $\mathcal{C}'$ have minimum distance $4$, $6$, respectively, and the 
weight $4$ codewords in $\mathcal{C}$ are precisely the vectors $\pm h_\ell$, for lines $\ell$ of $\PG(2,3)$;
\item[v)] Let $\ell$ be a line in $\PG(2,3)$.  Then 
\begin{equation}\label{eq:codes1}
\sum_{i\in\mathcal{P}}\alpha_i=\sum_{i\in\ell}\alpha_i.
\end{equation}
\end{itemize}
\end{Lem}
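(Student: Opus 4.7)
My plan is to exploit two basic facts about $\PG(2,3)$: each of its $13$ lines contains $4$ points, any two distinct lines meet in exactly one point, and each point lies on exactly $4$ lines. Writing $\alpha=\sum_m c_m h_m\in\mathcal{C}$, we have $|\ell\cap m|=1$ or $4$ according as $\ell\neq m$ or $\ell=m$, which (working mod $3$) reduces every claim to a small calculation in the coefficients $c_m$.

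For (v), compute
\[
\sum_{i\in\ell}\alpha_i \;=\; \sum_m c_m\,|\ell\cap m| \;=\; 4c_\ell+\sum_{m\neq\ell}c_m \;\equiv\; \sum_m c_m \pmod 3,
\]
and similarly $\sum_i\alpha_i = 4\sum_m c_m\equiv \sum_m c_m\pmod 3$, so the two sums agree. For (i), $\wt(\alpha)\equiv\sum_i\alpha_i^2=\alpha\cdot\alpha\pmod 3$, and an identical count gives $\alpha\cdot\alpha\equiv(\sum_m c_m)^2\pmod 3$, which is always $0$ or $1$. Combined with (v) this yields (ii). The same calculation produces the bilinear identity $\alpha\cdot\beta\equiv (\sum_m c_m)(\sum_m d_m)\pmod 3$ on $\mathcal{C}$, from which $\mathcal{C}'\subseteq\mathcal{C}^\perp$ is immediate; for equality in (iii), I would invoke the classical fact $\dim_{\mF_3}\mathcal{C}=7$ for the ternary code of $\PG(2,3)$, which together with $\mathbf{1}\in\mathcal{C}\setminus\mathcal{C}'$ gives $\dim\mathcal{C}'=\dim\mathcal{C}-1=6=\dim\mathcal{C}^\perp$, forcing equality.

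The main work lies in (iv). Weight $2$ is excluded by (i); weight $1$ by (v) (for $e_j\in\mathcal{C}$ and any of the $9$ lines missing $j$, $\sum_{i\in\ell}\alpha_i=0\neq 1=\sum_i\alpha_i$). A weight-$3$ codeword lies in $\mathcal{C}'$ by (ii); its values on the support $\{i,j,k\}$ lie in $\{\pm 1\}$ and sum to $0$, so are all equal, and (v) forces $|\{i,j,k\}\cap\ell|\equiv 0\pmod 3$ for every line $\ell$. The unique line through $i,j$ then contains $k$, but then each of the three further lines through $i$ meets the support in exactly one point, contradiction. For weight $4$ with support $S$: after rescaling assume every line has $\alpha$-sum $1$, so every line meets $S$. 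Letting $\ell_j$ count lines meeting $S$ in exactly $j$ points, a double count gives $\sum_j \ell_j=13$ and $\sum_j j\ell_j=16$. If no three points of $S$ are collinear, the $\binom{4}{2}=6$ pairs determine $6$ distinct lines with $|\ell\cap S|=2$, so $\ell_2=6$, $\ell_1=4$, $\ell_0=3$, contradicting every line meeting $S$. If three points of $S$ are collinear but $S$ is not a line, one finds $\ell_3=1$, $\ell_2=3$, and the incidences yield $18\neq 16$, again a contradiction. Hence $S$ is a line, and checking each of the three other lines through a point $i\in S$ pins down $\alpha_i=1$, whence $\alpha=h_S$. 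Finally, $\mathcal{C}'$ has all weights divisible by $3$ by (ii), so its minimum distance is at least $6$, and $h_\ell-h_{\ell'}$ for any two distinct lines realises this.

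The main obstacle is the weight-$4$ case of (iv), where one must rule out every non-line $4$-subset of $\PG(2,3)$ by a careful double count; splitting into the ``no three collinear'' and ``exactly three collinear'' subcases makes this tractable. A secondary hurdle is the inclusion $\mathcal{C}^\perp\subseteq\mathcal{C}'$ in (iii), which I dispose of by appealing to the known $\mF_3$-rank of the incidence matrix of $\PG(2,3)$ rather than reproving it.
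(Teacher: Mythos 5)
Your argument is correct. Note, however, that the paper itself offers no proof of this lemma: it is quoted verbatim from \cite[Prop.~3.1]{CEM}, so there is no in-paper argument to compare against. Your reduction of everything to the fact that $|\ell\cap m|\equiv 1\pmod 3$ for all (not necessarily distinct) lines $\ell,m$ of $\PG(2,3)$ is the natural route and is essentially the one taken in \cite{CEM}: it gives (v), then (i) via $\wt(\alpha)\equiv\alpha\cdot\alpha\equiv(\sum_m c_m)^2$, then (ii), and the inclusion $\mathcal{C}'\subseteq\mathcal{C}^\perp$. The one external input is the equality in (iii), where you correctly identify that $\mathcal{C}^\perp\subseteq\mathcal{C}'$ cannot be obtained from the bilinear identity alone and requires the classical $3$-rank $\dim_{\mF_3}\mathcal{C}=7$ (together with $\mathbf{1}=\sum_\ell h_\ell\in\mathcal{C}\setminus\mathcal{C}'$); citing that rank is legitimate and is also how \cite{CEM} proceed. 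Your treatment of (iv) — excluding weights $1,2,3$ by (i), (ii), (v), and handling weight $4$ by the double count $\sum_j\ell_j=13$, $\sum_j j\ell_j=16$ split according to whether $S$ contains a collinear triple — is complete and correct, including the final normalisation showing $\alpha=\pm h_S$ and the realisation of weight $6$ in $\mathcal{C}'$ by $h_\ell-h_{\ell'}$.
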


We also need the following concepts. A vertex $\beta$ is said to \emph{cover} a vertex $\alpha$ if
$\beta_i=\alpha_i$ for all $i\in\supp(\alpha)$. So for example, $(2,1,1,0)$ covers $(2,1,0,0)$ in 
$\mF_3^4$. A set $\mathcal{S}$ of vertices of weight $k$ in $\mF_q^m$ is a 
\emph{$q-ary$  $t-(m,k,\lambda)$ design} if every vertex of weight $t$ in $\mF_q^m$ is covered by 
exactly $\lambda$ elements of $\mathcal{S}$. It is known that for a linear completely regular code $C$ in 
$\mF_q^m$ with minimum distance $\delta$, the set of codewords of weight $k$ forms a $q$-ary 
$\lfloor\frac{\delta}{2}\rfloor-(m,k,\lambda)$ design for some $\lambda$ \cite[Theorem 2.4.7]{vantilborg}.

\begin{proof}[Proof of Theorem \ref{thm:code1}]
First we show that $\mathcal{C}$ is uniformly packed.
By Lemma \ref{lem:conway1}, and since the codes have length $13$, the possible weights of non-zero 
codewords of $\mathcal{C}^\perp=\mathcal{C}'$ are $6,9$ and $12$, and hence $s^*\leq 3$. 
Also, since $\mathcal{C}$ has minimum distance $4$, the covering radius $\rho$ is at least $2$, so 
$2\leq\rho\leq s^*\leq 3$. Let $\ell_i$ for $i=1,2$ denote two of the four lines that contain 
the point $1$ in $\PG(2,3)$, and let $x, y$ be points
on $\ell_1,\ell_2$ respectively, that are distinct from $1$. Let $\nu$ be any vertex of 
weight $3$ with $\supp(\nu)=\{1, x, y\}$. 
Since $\PG(2,3)$ is a projective plane, it follows that $|\supp(\nu)\cap\ell |\leq 2$ for 
all lines $\ell$ of $\PG(2,3)$. Thus, Lemma \ref{lem:conway1}(iv) implies that 
$d(\nu,\alpha)\geq 3$ for all codewords $\alpha$ of weight $4$. Since all other 
codewords in $\mathcal{C}$ have weight at least $6$, it followse that $d(\nu,\mathcal{C})\geq 3$, 
and hence $\rho=s^*=3$. Thus $\mathcal{C}$ is uniformly packed.

Suppose that $\mathcal{C}$ is completely regular. Then the set of codewords of weight $4$ forms 
a $3$-ary $2-(13,4,\lambda)$ design for
some $\lambda>0$. However, by Lemma \ref{lem:conway1}(iv), every vertex of weight $2$ with 
constant non-zero entries is covered by exactly one codeword of weight $4$, 
whereas a vertex of weight $2$ with distinct non-zero entries is not covered by any codeword of weight $4$.
This contradiction proves that  $\mathcal{C}$ is not completely regular.
\end{proof}

\subsection{Conway groupoids and completely regular codes.} As we have seen, the Conway groupoid 
$M_{13}$ is interesting in several different ways. Its hole stabilizer $\pi_\infty(\mathcal{D})$ 
is multiply transitive, and hence primitive; the perfect Golay code over $\mF_3$ can be constructed 
from $C_{\mF_3}(\PG(2,3))$; and, moreover, the code $C_{\mF_3}(\PG(2,3))$ has some interesting 
and rare properties. Thus it is natural to ask if one can construct other interesting codes
from supersimple designs $\mathcal{D}$ for which $\pi_\infty(\mathcal{D})$ is acting primitively. 
This question was addressed in \cite{GGS} for the designs defined in Examples \ref{ex: symplectic} 
and \ref{ex: orthogonal}.

\begin{Thm}\label{thm:codesct} The following hold:
\begin{itemize}
\item[(a)] For $\ep \in \mathbb{F}_2$, $C_{\mF_2}(\mathcal{D^\ep})$ is a completely 
transitive code with covering radius $3$ and minimum distance $4$.  
\item[(b)] $C_{\mF_2}(\mathcal{D}^a)$ is a completely transitive code with covering 
radius $4$ and minimum distance $4$. 
\end{itemize}
\end{Thm}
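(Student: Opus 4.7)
The plan is to prove both parts by (i) identifying a large automorphism group of the code coming from the Conway groupoid, (ii) pinning down the minimum distance and the covering radius, and (iii) verifying that this group acts transitively on each cell of the distance partition. Throughout, write $C = C_{\mF_2}(\mathcal{D})$ for the relevant code, $\Omega$ for its point set, and $\mathcal{B}$ for the block set.

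First, by Examples \ref{ex: symplectic} and \ref{ex: orthogonal}, the group $G := \L_\infty(\mathcal{D})$ acts on $\Omega$ preserving $\mathcal{B}$: in case (a) this is $\Sp_{2m}(2)$ acting on $\Omega^\ep$, and in case (b) this is $2^{2m}.\Sp_{2m}(2)$ acting on $V = \Omega^a$ as the affine symplectic group. The induced coordinate permutations on the Hamming graph $H(n,2)$ preserve the span of block characteristic vectors, so $G \leq \Aut(C)$. For the minimum distance, each block contributes a weight $4$ codeword to $C$, and supersimplicity of $\mathcal{D}$ forces sums of distinct blocks to have weight at least $4$; codewords of weight $1$, $2$, or $3$ are ruled out using the transitivity of $G$ on $\Omega$ (and the regular translation action in case (b)), since the $G$-orbit of any such codeword would generate a subcode with support pattern incompatible with the incidence structure of $\mathcal{D}$.

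For the covering radius, the method is to bound $\rho$ above by the dual degree $s^*$ of $C^\perp$, which is computable from the block-parity relations defining $C^\perp$, and then to exhibit explicit vectors realising the claimed distance ($3$ in case (a), $4$ in case (b)) from $C$; in particular $\rho = s^*$, so $C$ is uniformly packed. Complete transitivity then reduces to showing $G$ is transitive on each cell $C_i$ of the distance partition. Here Witt's theorem for $\Sp_{2m}(2)$ (respectively the affine symplectic group) classifies $G$-orbits on small subsets of $\Omega$ by their geometric type with respect to the forms $\varphi$ and $\theta$. The task is to match these orbits with the cells $C_i$: for each $i \leq \rho$, identify a canonical coset representative of each $C$-coset at distance $i$, and check that all such representatives lie in a single $G$-orbit.

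The hard part is this last step—verifying that the coset leaders of distance-$i$ cosets are all $G$-equivalent. It requires a case-by-case geometric analysis, using Witt-extension arguments to move one isotropic or singular configuration to another by an element of $G$. The calculation is most delicate for the intermediate values of $i$, where several geometric types of small subsets could a priori occur but must turn out to correspond to the same cell of the distance partition.
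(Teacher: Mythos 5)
This survey does not actually prove Theorem~\ref{thm:codesct}: it is quoted from \cite{GGS}, so there is no in-paper argument to measure you against. Your outline is broadly consistent with the strategy of that reference (use the groupoid $\L_\infty(\De)$, which here is a subgroup of $\Aut(\De)$, as a group of permutation automorphisms of $C$; compute the minimum distance and covering radius; then use Witt's lemma for $(\,V,\varphi,\theta\,)$ to identify the $C$-cosets at each distance with geometric types and show each cell $C_i$ is a single orbit). One point you handle correctly but should make explicit: transitivity on the \emph{vertices} of $C_i$ reduces to transitivity on the \emph{cosets} contained in $C_i$ precisely because each cell is a union of full cosets and the translations by codewords, which also stabilize $C$, act transitively on each coset.

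That said, as written the proposal has genuine gaps. First, the minimum-distance argument is not a proof: ``the $G$-orbit of a low-weight codeword would generate a subcode with support pattern incompatible with the incidence structure'' does not rule anything out, and sums of three or more blocks can in principle have small weight (indeed, by property \eqref{e:symdiff} three pairwise $2$-intersecting blocks can sum to zero). A correct and short route is to observe that $C^\perp$ contains the all-ones vector and all (restrictions of) affine-linear functionals on $V$ -- each sums to zero over any affine plane -- and these separate points, so no nonzero codeword of weight at most $3$ can exist, while the blocks themselves give weight $4$. Second, and more seriously, everything that actually constitutes the theorem -- that the covering radius equals $3$ for $\De^\ep$ and $4$ for $\De^a$, the determination of which geometric types (e.g.\ cosets indexed by quadratic forms of each type, or by vectors graded by $\theta$ and $\varphi$) occur at each distance, and the verification that each such collection of cosets is a single $\Sp_{2m}(2)$- resp.\ $2^{2m}.\Sp_{2m}(2)$-orbit -- is labelled ``the hard part'' and deferred. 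Since the statement is precisely these numerical and orbit computations, the proposal is a plausible plan rather than a proof; to complete it you must exhibit vectors at the claimed distances, bound $\rho$ above (your $\rho\leq s^*$ route requires computing the dual weight enumerator, which is itself nontrivial), and carry out the Witt-type orbit matching case by case.
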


The following question arises naturally as a consequence of Theorems \ref{thm:code1} and \ref{thm:codesct}.

\begin{Qu} Let $\mathcal{D}=(\Omega,\B)$ be a supersimple $2-(n,4,\lambda)$ design 
such that $\pi_\infty(\mathcal{D})$ is a primitive subgroup $\Sym(\Omega\setminus\{\infty\})$ 
which does not contain $\Alt(\Omega\setminus\{\infty\})$. 
Does there exist a prime $r$ such that $C_{\mF_r}(\mathcal{D})$ is completely regular, 
or at the very least uniformly packed (in the wide sense)?
\end{Qu}

We remarked earlier that for each point  $p \in \PG(2,3)$, Conway et al define a code 
$\C_p$ with the property that $\Aut(\C_p) \cong M_{12}$. They use this code to show that 
$\pi_\infty(\PG(2,3)) \cong M_{12}$: namely, in \cite[Proposition 3.3]{CEM} they show 
that the elementary move $[p,q]$ sends $\C_p$ to $\C_q$, and from this they deduce in
\cite[Proposition 3.4]{CEM} that $\pi_\infty(\PG(2,3)) \leq M_{12}$. Equality then follows 
by an explicit computation. Arguing in this spirit, with $\C$ being one of the codes 
$\C_{\mathbb{F}_2}(\De)$ of Theorem \ref{thm:codesct}, it is relatively straightforward 
to show that for and point $p$ of $\De$, the code $\C_p$ obtained by puncturing $\C$ at 
$p$ has automorphism group isomorphic to the stabilizer $\Stab_{\Aut(\De)}(p)$ of $p$. 
Moreover, the elementary move $[p,q]$ sends $\C_p$ to $\C_q$, and we deduce that 
$\pi_\infty(\De) \leq \Stab_{\Aut(\De)}(p)$. This fact can be used to give an 
alternative proof (to that given in \cite{GGS}) of the isomorphism type of $\L_\infty(\De)$ 
for the designs $\De$ defined in Examples \ref{ex: symplectic} and \ref{ex: orthogonal}.

\section{Classification results}

The programme to classify Conway groupoids has, thus far, been restricted to the situation 
where $\De$ is a supersimple $2-(n,4,\lambda)$ design, a family which includes $\PG(2,3)$. 
In this section we describe the progress that has been made in this setting.

\subsection{Relation between \texorpdfstring{$n$}{n} and \texorpdfstring{$\lambda$}{lambda}}

In this subsection we connect the relative values of the parameters $\lambda$ and $n$ with the 
behaviour of the hole stabilizer $\pi_\infty(\De)$ in its action on $\Omega\backslash\{\infty\}$.
By examining the examples given in \S\ref{s: examples} one may be lead to observe the following: 
if we fix $\lambda$ and allow $n$ to increase, the way $\pi_\infty(\De)$ acts on 
$\Omega\setminus\{\infty\}$  seems to move through the following states:
\[
 \textrm{trivial} \longrightarrow \textrm{intransitive} \longrightarrow \begin{array}{l}
                                                                         \textrm{transitive} \\ \textrm{imprimitive}
                                                                        \end{array} \longrightarrow \textrm{primitive}
\longrightarrow  \begin{array}{l}
                        \Alt(\Omega\setminus\{\infty\}) \ \textrm{ or }  \\ \Sym(\Omega\setminus\{\infty\}).
                                                                        \end{array} 
\]
\noindent
This observation was proved and quantified in \cite{GGNS,GGS}. We note first that two points 
in a $2-(n,4,\lambda)$ design lie together on $\lambda$ lines, and the set theoretic union of 
these lines in a supersimple  design has size $2\lambda + 2$. Thus for supersimple designs we 
must have $n\geq 2\lambda+2$. On the other hand, Lemma~\ref{lem:nlambda} gives an upper bound 
in terms of $\lambda$ beyond which, for all designs $\De$, $\pi_\infty(\De)$ is 
$\Alt(\Omega\setminus\{\infty\})$ or $\Sym(\Omega\setminus\{\infty\})$. This bound is refined 
in the result quoted below, and we make some comments about the proof in Remark~\ref{rem:n and lambda}.

\begin{Thm}\label{t: n and lambda} (\cite[Theorem B]{GGNS} and \cite[Theorem E]{GGS})
 Suppose that $\De$ is a supersimple $2-(n,4,\lambda)$ design, and $\infty$ is a point of $\De$.
 \begin{enumerate}
  \item If $n>2\lambda+2$, then $\pi_\infty(\De)$ is non-trivial;
  \item if $n>4\lambda+1$, then $\pi_\infty(\De)$ is transitive;
  \item if $n> 9\lambda+1$, then $\pi_\infty(\De)$ is primitive;
\item  if $n>9\lambda^2-12\lambda+5$, then either 
$\pi_\infty(\De)\subseteq \Alt(\Omega\setminus\{\infty\})$, or else $\De=\PG(2,3)$, $\pi_\infty(\De) =  M_{12}$ (and $\lambda=1$).
\end{enumerate}
\end{Thm}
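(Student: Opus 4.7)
The plan is to prove the four parts in sequence, relying throughout on the following structural fact about supersimple $2$-$(n,4,\lambda)$ designs: any two points lie together on exactly $\lambda$ lines which pairwise intersect only in that pair, so their union has size exactly $2\lambda + 2$; this union is precisely the support of the elementary move between those two points, and that elementary move is a product of exactly $\lambda + 1$ transpositions.

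For (1), I would argue contrapositively. If $\pi_\infty(\De)$ is trivial then the closed walk $[\infty, a, b, \infty]$ must collapse to the identity for every admissible pair $(a,b)$, yielding an identity of the form $[\infty, a]\cdot[a, b] = [\infty, b]$. Comparing the supports of the two sides, using the structural fact above, forces the $\lambda$ lines through $\{\infty, a\}$ to cover all of $\Omega$, whence $n = 2\lambda + 2$. For (2), the task is to exhibit, for each $x, y \in \Omega \setminus \{\infty\}$, a hole-stabilizer element mapping $x$ to $y$; one builds such a move from short words in elementary moves conjugated by $[\infty, a]$'s, and the hypothesis $n > 4\lambda + 1$ provides enough combinatorial room to avoid accidental support collisions. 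For (3), I would suppose a non-trivial block system $\Sigma$ on $\Omega \setminus \{\infty\}$ with blocks of size $m > 1$; the requirement that every elementary move (with support of size $2\lambda + 2$) respect $\Sigma$ yields a double-counting inequality which, combined with $m \mid n-1$, bounds $n$ by $9\lambda + 1$.

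The main obstacle is (4). The case $\lambda$ odd is immediate, because each elementary move is then a product of $\lambda + 1$ (an even number of) transpositions and so is an even permutation, giving $\pi_\infty(\De) \subseteq \Alt$ automatically. The case $\lambda$ even is subtle: each elementary move is odd, a length-$k$ closed walk at $\infty$ yields a permutation of parity $(-1)^{k(\lambda+1)} = (-1)^k$, and the construction of part (1) already furnishes odd elements of $\pi_\infty(\De)$ as soon as $n > 2\lambda + 2$. The theorem in this case therefore amounts to the non-existence claim that no supersimple $2$-$(n,4,\lambda)$ design with $\lambda$ even can satisfy $n > 9\lambda^2 - 12\lambda + 5$, with $\PG(2,3)$ (where $\lambda = 1$) surviving as an isolated small-$n$ configuration.

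To establish this I would combine the primitivity from part (3) with a Jordan-type theorem: a primitive subgroup of $\Sym(n-1)$ that contains a non-identity element whose support is strictly smaller than some explicit bound $f(n)$ must contain $\Alt_{n-1}$. Short products of the form $[a,b][c,d]$ with carefully overlapping supports produce non-identity elements of $\pi_\infty(\De)$ whose support is bounded by a quadratic in $\lambda$, and the threshold $n > 9\lambda^2 - 12\lambda + 5$ is calibrated precisely so that this quadratic bound falls below the Jordan threshold, forcing $\pi_\infty(\De) \supseteq \Alt_{n-1}$. In the $\lambda$-even sub-case this conclusion is incompatible with further structural restrictions imposed by the design (obtainable from Lemma~\ref{l: iso} and standard line-counting), yielding the stated dichotomy. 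The hardest step is certainly the Jordan-threshold calibration: identifying short group-theoretic words in $\pi_\infty(\De)$ whose support can be controlled by a polynomial in $\lambda$ (and no larger quantity depending on $n$) is where all the combinatorial subtlety of supersimple designs must be leveraged.
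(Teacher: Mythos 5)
Your treatment of part (4) goes astray at the outset. The relation ``$\subseteq$'' printed in the statement is a slip for ``$\supseteq$'': part (4) is described in Remark~\ref{rem:n and lambda}(d) as an improvement on Lemma~\ref{lem:nlambda}, whose conclusion is $\L_\infty(\De)\supseteq\Alt(\Omega)$; Corollary~\ref{c: finite iso} is drawn ``immediately'' from it; and under the literal reading the $\PG(2,3)$ alternative would be vacuous, since $M_{12}\leq\Alt(12)$. Taking the statement at face value leads you to a trivial parity argument for odd $\lambda$ and, for even $\lambda$, to the assertion that no supersimple $2$-$(n,4,\lambda)$ design exists with $n>9\lambda^2-12\lambda+5$. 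That assertion is false: supersimple $2$-$(n,4,2)$ designs exist for arbitrarily large admissible $n$, and by Theorem~\ref{t: small lambda} all of them except the unique $2$-$(10,4,2)$ design satisfy $\pi_\infty(\De)\supseteq\Alt(\Omega\setminus\{\infty\})$ --- which is the intended conclusion of part (4), not a contradiction. So the case division by the parity of $\lambda$ is a dead end.

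Your fallback strategy (primitivity, plus a non-identity element of small support, plus a minimal-degree threshold) is the right skeleton for the correct statement, but two essential ingredients are missing. First, the threshold cannot be a classical Jordan-type theorem: Remark~\ref{rem:n and lambda}(a),(d) records that part (4) depends on CFSG precisely because the CFSG-free bound of Babai yields only the weaker estimate $144\lambda^2+120\lambda+26$ of Lemma~\ref{lem:nlambda}; the bound $9\lambda^2-12\lambda+5$ requires the Liebeck--Saxl minimal degree theorem, applied to the move sequences $[\infty,a,b,\infty]$ with $\infty,a,b$ collinear, whose supports are linearly bounded in $\lambda$. Second, you have no mechanism for producing the $\PG(2,3)$ exception: it is not ``an isolated small-$n$ configuration'' excluded by the bound (for $\lambda=1$ the hypothesis is $n>2$ and $\PG(2,3)$ has $n=13$); it arises in the degenerate branch where every collinear move sequence $[\infty,a,b,\infty]$ is trivial, so that no small-support element is available, and that branch is settled by the separate classification of Theorem~\ref{t: d}, which returns exactly the alternatives: Boolean quadruple system, $\PG(2,3)$ with $\pi_\infty(\De)\cong M_{12}$, or $\pi_\infty(\De)\supseteq\Alt(\Omega\setminus\{\infty\})$. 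On the earlier parts: your sketch of (1) does not work as stated, since the identity $[\infty,a][a,b]=[\infty,b]$ does not force the lines through $\{\infty,a\}$ to cover $\Omega$; the actual route is that a trivial hole-stabilizer forces $\De$ to be a Boolean quadruple system, for which $n=2\lambda+2$. For (2) the concrete point you need is that $[\infty,x,y,\infty]$ sends $x$ to $y$ whenever $\infty$ lies on no line containing both $x$ and $y$, so that every orbit of $\pi_\infty(\De)$ on $\Omega\setminus\{\infty\}$ has size at least $n-1-2\lambda>(n-1)/2$.
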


\begin{Rem}\label{rem:n and lambda}{\rm 
(a) The proofs of the first three parts of Theorem~\ref{t: n and lambda} are independent of the 
Classification of the Finite Simple Groups (CFSG) (as is that of Lemma~\ref{lem:nlambda}), but 
this is not true for part (4).

(b) Part (1) can be strengthened: in \cite[Theorem B]{GGNS} it was shown that  $\pi_\infty(\De)$ 
is trivial if and only if $\De$ is a Boolean quadruple system, that is, one of the designs from 
Example~\ref{ex: boolean}.

(c) While the proofs of parts (2) and (3) are relatively straightforward counting arguments, 
the proof of Lemma~\ref{lem:nlambda} lies somewhat deeper. It relies on a lower bound proved by 
Babai~\cite{Babai} for the minimum number of points moved by a non-identity element of a 
primitive permutation group that does not contain the full alternating group. Babai's bound 
is combined with the observation that a move sequence $[\infty, a, b, \infty]$ will have 
support of size at most $6\lambda+2$; now one must check that there exists such an element 
that is non-trivial, and the result follows.

(d)  Part (4) (which is an improvement on Lemma~\ref{lem:nlambda}) is obtained via the same 
method except that only those move sequences $[\infty, a, b, \infty]$ for which $a,b$ and 
$\infty$ are collinear are considered, and the result of Babai is replaced by a stronger 
result due to Liebeck and Saxl \cite{LS}; it is here that the dependence on CFSG enters. 
 
(e) It is natural to ask whether the bounds in  Lemma~\ref{lem:nlambda} and 
Theorem~\ref{t: n and lambda} are best possible. Certainly part (1) cannot be 
improved, but for the others it is less clear.
 }
\end{Rem}

\begin{Qu}\label{q: linear}
Can the quadratic function in Lemma~\ref{lem:nlambda} be replaced by a linear function?
\end{Qu}

An immediate corollary of Theorem~\ref{t: n and lambda} is the following.

\begin{Cor}\label{c: finite iso}
For a positive integer  $\lambda$, there are only finitely many supersimple $2-(n,4,\lambda)$ 
designs $(\Omega, \B)$ for which $\L(\De)$ does not contain $\Alt(\Omega)$. 
\end{Cor}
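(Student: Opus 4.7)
The plan is to derive this corollary as a direct consequence of Lemma \ref{lem:nlambda}, combined with the elementary observation that, for fixed parameters $n$ and $\lambda$, only finitely many supersimple $2$-$(n,4,\lambda)$ designs exist on a given $n$-element point set. The substantive ingredient is the bound on $n$ in terms of $\lambda$ beyond which $\L_\infty(\De) \supseteq \Alt(\Omega)$; everything else reduces to a finiteness count.

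First I would fix $\lambda \in \mathbb{Z}_{>0}$ and let $\De = (\Omega, \B)$ be a supersimple $2$-$(n,4,\lambda)$ design with $\L(\De) \not\supseteq \Alt(\Omega)$. By Lemma \ref{l: iso}, together with the strengthening mentioned immediately after its statement, the property ``$\L(\De)$ contains $\Alt(\Omega)$'' is independent of the choice of base point, so the abbreviated notation $\L(\De)$ is unambiguous. Applying Lemma \ref{lem:nlambda} contrapositively then forces $n \leq 144\lambda^2 + 120\lambda + 26$, so $n$ lies in a finite set determined by $\lambda$ alone.

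Next I would observe that for each admissible pair $(n, \lambda)$ there are only finitely many supersimple $2$-$(n,4,\lambda)$ designs on a point set of size $n$ (even up to labelling): the number of blocks is rigidly fixed at $\lambda n(n-1)/12$ by the standard double-count of point-pairs through blocks, and each block is chosen from the finite collection $\binom{\Omega}{4}$ of four-element subsets of $\Omega$. Combining this with the $n$-bound from the previous step yields the corollary, since a finite union of finite sets is finite.

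There is no real obstacle in this deduction; all of the work resides in the proofs of Theorem \ref{t: n and lambda} and Lemma \ref{lem:nlambda}. As an alternative route one could invoke the sharper bound $n > 9\lambda^2 - 12\lambda + 5$ coming from Theorem \ref{t: n and lambda}(4), which would require first passing from $\pi_\infty(\De) \supseteq \Alt(\Omega\setminus\{\infty\})$ to $\L_\infty(\De) \supseteq \Alt(\Omega)$ via the coset decomposition of $\L_\infty(\De)$ over $\pi_\infty(\De)$; this delivers no logical improvement in the conclusion of the corollary.
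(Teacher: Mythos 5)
Your argument is correct and takes essentially the same route as the paper, which presents the corollary as an immediate consequence of the quantitative bound on $n$ in terms of $\lambda$ followed by the trivial observation that only finitely many designs live on a point set of bounded size. The one (harmless, arguably advantageous) difference is that you invoke Lemma~\ref{lem:nlambda}, whose conclusion is directly $\L_\infty(\De)\supseteq\Alt(\Omega)$ and which is independent of CFSG, whereas the paper cites Theorem~\ref{t: n and lambda}, whose part (4) is phrased in terms of $\pi_\infty(\De)$ and therefore needs the small extra translation you note at the end of your proposal.
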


This corollary suggests that a full classification for a given $\lambda$ may be possible. 
This has been achieved for $\lambda \leq 2$ in \cite[Theorem C]{GGNS}, but all other cases are open.

\begin{Thm}\label{t: small lambda} \cite[Theorem C]{GGNS}
Let $\De=(\Omega,\B)$ be a supersimple $2$-$(n,4,\lambda)$ design for which $\L(\De)$ 
does not contain $\Alt(\Omega)$, and such that $\lambda \leq 2$. Let $\infty\in\Omega$. Then either
\begin{enumerate}
\item $\lambda=1$, $\De=\PG(2,3)$ and $\pi_\infty(\De)=M_{12}$; or
\item  $\lambda=2$, $\De$ is the unique supersimple $2-(10,4,2)$ design and $\pi_\infty(\De)=\Sym(3)\wr\Sym(2)$.
\end{enumerate}
\end{Thm}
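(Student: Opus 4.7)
The strategy is to reduce to a finite enumeration problem and then handle the remaining candidates directly, paralleling (but refining) the arguments already in play for Theorem~\ref{t: n and lambda}.

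First, I would use the hypothesis to bound $n$ in terms of $\lambda$. Lemma~\ref{lem:nlambda} gives an immediate (though crude) upper bound: $n \leq 144 \lambda^2 + 120 \lambda + 26$, since for larger $n$ we would have $\L(\De) \supseteq \Alt(\Omega)$, against the hypothesis. I would then sharpen this. By Theorem~\ref{t: n and lambda}(3), $\pi_\infty(\De)$ acts primitively on $\Omega\setminus\{\infty\}$ once $n > 9\lambda+1$. For any three collinear points $\infty, a, b$, the move $[\infty, a, b, \infty]$ is an element of $\pi_\infty(\De)$ with support of size at most $6\lambda+2$. Combining this with the Liebeck--Saxl lower bound on the minimal degree of a primitive permutation group not containing the alternating group (the same tool deployed in Theorem~\ref{t: n and lambda}(4)) yields a much tighter restriction on $n$, provided such a move is non-trivial; for $\lambda \in \{1,2\}$ this reduces the feasible range of $n$ to a short explicit list.

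Next, for each remaining value of $n$ I would enumerate the supersimple $2$-$(n,4,\lambda)$ designs. For $\lambda=1$ these are Steiner systems $S(2,4,n)$, which exist only when $n\equiv 1,4\pmod{12}$, and for small $n$ the isomorphism classes are known from the design-theory literature; in particular $S(2,4,13)$ is uniquely $\PG(2,3)$. For $\lambda=2$, the list of supersimple $2$-$(n,4,2)$ designs in the relevant range is also short, and a combinatorial description of the unique $2$-$(10,4,2)$ design (for instance via a block system built from $2$-subsets of a $5$-set) makes it identifiable. For each candidate $\De$, invoking Lemma~\ref{l: iso} to choose a convenient base-point $\infty$, I would compute $\pi_\infty(\De)$ directly from the elementary moves. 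For $\PG(2,3)$ the identification $\pi_\infty = M_{12}$ is already established by Conway. For the $2$-$(10,4,2)$ design, an explicit (possibly computer-assisted) computation yields $\pi_\infty \cong \Sym(3)\wr\Sym(2)$. For every other candidate, one must verify that $\pi_\infty(\De) \supseteq \Alt(\Omega\setminus\{\infty\})$, from which a parity-of-moves argument (immediate for $\lambda=1$ where all moves are even, and a short case analysis for $\lambda=2$ since moves occur in both parities) gives $\L(\De)\supseteq \Alt(\Omega)$, contradicting the hypothesis.

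The main obstacle is obtaining a bound on $n$ small enough to render the enumeration tractable: the raw Lemma~\ref{lem:nlambda} bound ($n\leq 290$ for $\lambda=1$ and $n\leq 842$ for $\lambda=2$) is too generous to handle by hand, so the sharpening via primitive-group minimal-degree bounds is essential, and this is the step that forces the proof to rely on CFSG. A secondary obstacle is the enumeration itself for the middle range of $n$; for $\lambda=2$ this may require a computer search through supersimple $2$-$(n,4,2)$ designs, whereas for $\lambda=1$ one can lean on existing classifications of small Steiner quadruple systems.
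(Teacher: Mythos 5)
Your overall strategy -- extract a non-identity element of $\pi_\infty(\De)$ with small support, force primitivity by counting, and then invoke a minimal-degree result for primitive groups -- is indeed the ``flavour'' of the proof of this theorem in \cite{GGNS}. But the actual proof does not use Liebeck--Saxl followed by a design enumeration: as noted after the theorem statement, it replaces Babai's bound by Manning's classical (CFSG-free) classification of primitive permutation groups containing a non-identity element moving fewer than $9$ points \cite{manning,manning1}. That classification returns a short explicit list of candidate groups, so it pins down the isomorphism type of $\pi_\infty(\De)$ directly, and the design is then reconstructed from the group; there is no need to bound $n$ and sweep through all supersimple designs in that range, and no dependence on CFSG. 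Your route loses both of these advantages.

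More importantly, two steps of your plan do not go through as written. First, your source of small-support elements fails exactly in the case $\lambda=1$: if $\infty,a,b$ lie on a common line $\{\infty,a,b,c\}$ of a $2$-$(n,4,1)$ design, then $[\infty,a]=(\infty,a)(b,c)$, $[a,b]=(a,b)(\infty,c)$, $[b,\infty]=(b,\infty)(a,c)$, and their product is the \emph{identity}; so every collinear-triple move is trivial and your Liebeck--Saxl step has nothing to act on. One must instead use triangles $\infty,a,b$ that are pairwise collinear but not on a common line (these have support at most $8$ when $\lambda=1$, which is precisely why Manning's ``class less than nine'' results suffice), and one must separately dispose of designs in which \emph{all} triangle moves vanish -- this is where the Boolean quadruple systems and the analysis underlying Theorem~\ref{t: d} enter, and your proposal has no mechanism for it. Second, for $\lambda=2$ your claim that ``the list of supersimple $2$-$(n,4,2)$ designs in the relevant range is short'' is unsupported: an element of support at most $6\lambda+2=14$ together with Liebeck--Saxl only forces $n$ up to roughly $43$, and no enumeration of supersimple $2$-$(n,4,2)$ designs over that range exists (their number grows rapidly with $n$), so the problem has not actually been reduced to a tractable finite check. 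Until these two points are repaired -- by identifying a genuinely non-trivial small-support element in every case not already covered by the Boolean/trivial-stabilizer analysis, and by replacing the design enumeration with a group-theoretic identification of $\pi_\infty(\De)$ -- the proposal does not constitute a proof.
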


The design in Theorem~\ref{t: small lambda}~(2) is connected to the family of designs in Example~\ref{ex: orthogonal} (recall that $\Sym(3)\wr\Sym(2)\cong{\rm O}_4^+(2)$ and see the remark after Example~\ref{ex: orthogonal}).
Theorem~\ref{t: small lambda} pre-dates Theorem~\ref{t: n and lambda}, but its proof is of a 
similar flavour. In this case, the result of Babai mentioned in Remark~\ref{rem:n and lambda} 
is replaced by classical work of Manning classifying primitive permutation groups that contain 
non-identity elements moving less than $9$ points, \cite{manning, manning1}.

\begin{Qu}\label{q: specific lambda}
 Can those $2-(n,4,\lambda)$ designs $\De$ be classified for which $\L(\De)$ does not contain 
$\Alt(\Omega)$ and $\lambda$ is, say, $3,4$ or $5$?
\end{Qu}

Some remarks concerning a classification for $\lambda=3$ can be found in \cite[\S7.3]{GGS}.

\subsection{Extra structure}
In this section we consider two instances where we have been able to give a complete 
classification of Conway groupoids $\L_\infty(\De)$ subject to some set of conditions 
on the elementary moves, for supersimple designs $\De$. 

\subsubsection{Collinear triples yielding trivial move sequences} Here we consider 
\cite[Theorem D]{GGS} which was a critical ingredient in the proof of 
Theorem \ref{t: n and lambda} (4) above, and which generalizes the 
classification of Conway groupoids associated with $2-(n,4,1)$ designs given in Theorem \ref{t: small lambda}.

\begin{Thm}\label{t: d}
Suppose that $\De$ is a supersimple $2-(n,4,\lambda)$ design, and that $[\infty, a, b, \infty]=1$ whenever $\infty$ is collinear with $\{a,b\}$. Then one of the following is true:
\begin{enumerate}
 \item $\De$ is a Boolean quadruple system and $\pi_\infty(\De)$ is trivial;
 \item $\De=\PG(2,3)$ (the projective plane of order $3$) and $\pi_\infty(\De) \cong M_{12}$; or 
 \item $\pi_\infty(\De)\supseteq \Alt(\Omega\setminus\{\infty\})$.
\end{enumerate}
\end{Thm}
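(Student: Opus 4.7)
The plan is to show that under the hypothesis, $\pi_\infty(\De)$ is generated by elements of support bounded linearly in $\lambda$, and then to invoke the Liebeck--Saxl bound on minimal degrees of primitive permutation groups to force either $\pi_\infty(\De) \supseteq \Alt(\Omega \setminus \{\infty\})$ or a bounded parameter regime. The main obstacle will then be the residual analysis of the finitely many small cases, in particular isolating $\PG(2,3)$ as the unique non-Boolean, non-alternating exception.

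First I would observe that any element of $\pi_\infty(\De)$ telescopes as a product of \emph{triangle moves} $T_{ab} := [\infty, a, b, \infty]$: indeed
\[
[\infty, a_1, \ldots, a_k, \infty] = T_{a_1 a_2} \cdot T_{a_2 a_3} \cdots T_{a_{k-1} a_k},
\]
since each intermediate factor $[a_i, \infty] \cdot [\infty, a_i]$ cancels (elementary moves are involutions and symmetric in their arguments). Thus $\pi_\infty(\De)$ is generated by the $T_{ab}$, and the hypothesis of the theorem states precisely that $T_{ab} = 1$ whenever $\{\infty, a, b\}$ lies on a common line of $\De$. Consequently $\pi_\infty(\De)$ is generated by the \emph{non-collinear} triangle moves.

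Next I would bound the support of such a non-collinear $T_{ab}$. Supersimplicity forces the $\lambda$ lines through any pair to meet only in that pair, so $|\supp([x,y])| = 2\lambda + 2$ for any collinear pair $x, y$. When $\{\infty, a, b\}$ lies on no common line, each pairwise intersection $\supp([\infty, a]) \cap \supp([a, b])$, etc., contains the shared endpoint, and a short counting argument gives a bound of the form $|\supp(T_{ab})| \leq 6\lambda + O(1)$. Thus $\pi_\infty(\De)$ is generated by elements of support linear in $\lambda$. Now I case-split. If $\pi_\infty(\De) = 1$, then by the strengthening of Theorem~\ref{t: n and lambda}(1) recorded in Remark~\ref{rem:n and lambda}(b), $\De$ is a Boolean quadruple system and case~(1) holds. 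Otherwise some non-collinear $T_{ab} \neq 1$ lies in $\pi_\infty(\De)$, giving a nontrivial element of support $O(\lambda)$. If also $n > 9\lambda + 1$, then Theorem~\ref{t: n and lambda}(3) makes $\pi_\infty(\De)$ primitive on $\Omega \setminus \{\infty\}$, and the Liebeck--Saxl bound \cite{LS} (cf.\ Remark~\ref{rem:n and lambda}(d)) implies either $\pi_\infty(\De) \supseteq \Alt(\Omega \setminus \{\infty\})$, giving case~(3), or else $n$ is bounded by a polynomial in $\lambda$.

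The residual analysis handles the regime where $n$ is bounded by a polynomial in $\lambda$; the trivial upper bound $\lambda \leq \binom{n-1}{3}$ then forces $\lambda$ bounded as well, leaving only finitely many parameter pairs $(n, \lambda)$. The hard part will be verifying that $\PG(2,3)$ is the unique non-Boolean exception in this residual regime. For each remaining parameter pair, one would enumerate the supersimple $2$-$(n, 4, \lambda)$ designs for which every collinear triangle move is trivial---either by combinatorial analysis of the local structure of lines through $\infty$ (in the spirit of the $\lambda \leq 2$ classification, Theorem~\ref{t: small lambda}, which relies on classical work of Manning on primitive groups with small minimal degree) or by computer search---and check that the only one for which $\pi_\infty(\De)$ fails to contain $\Alt(\Omega \setminus \{\infty\})$ is $\PG(2,3)$, thereby yielding case~(2).
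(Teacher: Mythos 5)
Your front end is sound: the telescoping identity does show that $\pi_\infty(\De)$ is generated by the triangle moves $T_{ab}=[\infty,a,b,\infty]$, the hypothesis kills the collinear ones, and each surviving generator has support at most $6\lambda+6$. The fatal problem is the residual analysis. After Liebeck--Saxl you are left with the regime where $n$ is bounded above by a linear (or at worst polynomial) function of $\lambda$; combining this with a bound on $\lambda$ in terms of $n$ does \emph{not} bound either parameter, so this regime contains infinitely many pairs $(n,\lambda)$ --- for instance every supersimple design with $n=2\lambda+2$ lies in it. There is therefore no ``finitely many parameter pairs'' to enumerate, and the back end of your argument collapses. Note also the direction of dependence recorded in the paper: Theorem~\ref{t: d} is an \emph{ingredient} in the proof of Theorem~\ref{t: n and lambda}(4) (see Remark~\ref{rem:n and lambda}(d) and the paragraph introducing Theorem~\ref{t: d}); the Liebeck--Saxl step is applied there to the \emph{collinear} triangle moves, and Theorem~\ref{t: d} is precisely what disposes of the degenerate case in which all of those vanish. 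So a proof of Theorem~\ref{t: d} cannot lean on the same minimal-degree machinery to finish: it must do something genuinely different in the range where $n$ is small relative to $\lambda$.

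What the proof in \cite{GGS} actually does there is structural rather than asymptotic. The key intermediate result, \cite[Proposition 6.4]{GGS}, shows that any design satisfying the hypothesis of Theorem~\ref{t: d} is obtained from a $2-(n,2^{\alpha+1},1)$ design $\De_0$ (a linear space whose line size is a power of $2$) by replacing each line of $\De_0$ with a Boolean quadruple system of order $2^{\alpha+1}$, which forces $\lambda=2^{\alpha}-1$; conversely all such designs satisfy the hypothesis. The three conclusions are then read off from the structure of $\De_0$ uniformly in $\lambda$ (the trivial linear space giving the Boolean systems, $\PG(2,3)$ arising as the $2-(13,4,1)$ case with $\alpha=1$, and the remaining cases yielding the alternating group). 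To salvage your approach you would need to replace the residual enumeration with a combinatorial argument of this kind; the minimal-degree bounds alone cannot close the gap.
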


Recall that a Boolean quadruple system was defined in Example~\ref{ex: boolean}. 
The proof of Theorem~\ref{t: d} given in \cite{GGS} involves an interesting intermediate result, \cite[Proposition 6.4]{GGS}. This result asserts that any design $\De$ which satisfies the hypotheses 
of Theorem \ref{t: d} can be constructed in a rather curious way: one starts with 
a $2-(n,2^{\alpha+1},1)$ design $\De_0$ (for some $\alpha \in \Z^+$) and one 
``replaces'' each line in $\De_0$ with a Boolean quadruple system of order 
$2^{\alpha+1}$. One thereby obtains a $2-(n,4,2^{\alpha}-1)$ design satisfying 
the given hypothesis, and all such designs arise in this way. 


\subsubsection{Regular two-graphs}\label{s: two graphs} In this section we study three properties which turn out 
to be connected in the context of Conway groupoids.

Firstly, a $2-(n,3,\mu)$ design $(\Omega,\C)$ is a \textit{regular two-graph} if, for 
any 4-subset $X$ of $\Omega$, either $0, 2$ or $4$ of the $3$-subsets of $X$ lie in 
$\C$. We are interested in those $2-(n,4,\lambda)$ designs $\De$ for which the pair 
$(\Omega, \C)$ is a regular two-graph, where $\C$ is the set of triples of collinear points. 

Secondly, we consider designs $\De=(\Omega, \B)$ that satisfy the following property: 
\begin{equation}\label{e:symdiff}
\textrm{if }B_1,B_2 \in \B \textrm{ such that } |B_1 \cap B_2|=2, \textrm{ then } 
B_1 \triangle B_2 \in \B \tag{$\triangle$}
\end{equation}
where $B_1\triangle B_2$ denotes the {\it symmetric difference} of  $B_1$ and $B_2$.

Finally, we are interested in those designs for which $\L_\infty(\De)$ is a group. 
The following result which is (part of) \cite[Theorems A and 4.2]{GGPS} connects 
these three properties. It is proved combinatorially.

\begin{Thm}\label{t:l8gp}
Let $\De=(\Omega,\B)$ be a supersimple $2-(n,4,\lambda)$ design with $n > 2\lambda+2$. 
Let $\C$ denote the set of collinear triples of points in $\Omega$, and let $\infty 
\in \Omega$. Then the following hold.
\begin{itemize}
\item[(a)] If $\L_\infty(\De)$ is a group then $\L_\infty(\De)$ is primitive on $\Omega$.
\item[(b)] If $(\Omega,\C)$ is a regular two-graph then $\pi_\infty(\De)$ is transitive 
on $\Omega\setminus\{\infty\}$.
\item[(c)] If $(\Omega,\C)$ is a regular two-graph and $\L_\infty(\De)$ is a 
group then $\pi_\infty(\De)$ is primitive on $\Omega\setminus\{\infty\}$.
\item[(d)] If $(\Omega, \C)$ is a regular two-graph and $\De$ satisfies \eqref{e:symdiff}, then $\L_\infty(\De)$ is a group.
\end{itemize}
\end{Thm}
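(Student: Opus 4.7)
For (a), I exploit the explicit form of an elementary move: $[a,b]=(a,b)(c_1,d_1)\cdots(c_\lambda,d_\lambda)$ is an involution whose non-trivial $2$-cycles record the $\lambda$ lines $\{a,b,c_i,d_i\}$ through $\{a,b\}$, so its support has exactly $2\lambda+2$ points. Now $\L_\infty(\De)$ is transitive on $\Omega$ by connectedness of $\De$, so it suffices to rule out a proper non-trivial block $B\ni\infty$. For any $a\in B\setminus\{\infty\}$ the involution $[\infty,a]\in\L_\infty(\De)$ fixes $B$ setwise, forcing each transposition $(c_i,d_i)$ arising from a line through $\{\infty,a\}$ to lie entirely in $B$ or entirely in $\Omega\setminus B$. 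Applying the analogous analysis to $[\infty,c]$ for $c\notin B$ lying on a line $\ell=\{\infty,a,c,d\}$ with $a\in B$ shows that the transpositions $(\infty,c)$ and $(a,d)$ induce the same permutation of blocks, forcing $c$ and $d$ to lie in a common block. Iterating, and using $n>2\lambda+2$ (so the lines through $\{\infty,a\}$ do not exhaust $\Omega$) together with Theorem~\ref{t: n and lambda}(1) to supply a non-trivial element of $\pi_\infty(\De)$, should collapse $B$ to $\{\infty\}$.

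For (b), define the graph $\Gamma_\infty$ on $\Omega\setminus\{\infty\}$ by $a\sim b$ iff $\{\infty,a,b\}\notin\C$. Whenever $a\sim b$, the move $[\infty,a,b,\infty]$ belongs to $\pi_\infty(\De)$ and sends $a$ to $b$, since $\infty$ lies on no line through $\{a,b\}$ and so $[a,b]$ fixes $\infty$. Hence the orbits of $\pi_\infty(\De)$ refine the connected components of $\Gamma_\infty$, and it suffices to prove $\Gamma_\infty$ is connected. I would argue by contradiction: if $X$ is a proper non-empty union of components, then every triple $\{\infty,a,b\}$ with $a\in X$ and $b\in\Omega\setminus(X\cup\{\infty\})$ lies in $\C$. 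The regular two-graph axiom, applied to $4$-subsets $\{\infty,a_1,a_2,b\}$ (with $a_1,a_2\in X$, $b\notin X$) and $\{\infty,a,b_1,b_2\}$ (with $a\in X$, $b_1,b_2\notin X$), forces rigid parity conditions on the remaining triples; varying $a_i$ and $b_i$ produces mutually incompatible conditions, a contradiction.

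For (c), parts (a) and (b) together make $\L_\infty(\De)$ $2$-transitive on $\Omega$. Suppose for contradiction that $\pi_\infty(\De)$ has a proper non-trivial block $B\subseteq\Omega\setminus\{\infty\}$; I would show that the $\L_\infty(\De)$-translates of $\tilde B := B\cup\{\infty\}$ form a block system on $\Omega$, contradicting primitivity from (a). For this I need to verify that the setwise stabilizer of $\tilde B$ in $\L_\infty(\De)$ is transitive on $\tilde B$, which reduces to exhibiting, for each $a\in B$, an elementary move $[\infty,a]$ that stabilizes $\tilde B$---that is, whose transpositions from lines through $\{\infty,a\}$ have both endpoints in $\tilde B$ or both outside $\tilde B$. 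The regular two-graph hypothesis is precisely what controls this, via the parity constraint on $4$-subsets crossing $B$.

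For (d)---the main obstacle---I must show $\L_\infty(\De)$ is closed under products; the identity lies in $\L_\infty(\De)$ as $[\infty,\infty]$, and closure under inverses follows from reversing move sequences and using connectedness. The key new input is \eqref{e:symdiff}: if $B_1,B_2\in\B$ share exactly two points $\{a,b\}$, say $B_1=\{a,b,c,d\}$ and $B_2=\{a,b,e,f\}$, then $B_3=\{c,d,e,f\}\in\B$. This produces a local relation among the elementary moves $[a,b]$, $[c,d]$, and $[e,f]$: each contains the transposition $(c,d)$ or $(e,f)$ coming from one of $B_1,B_2,B_3$, so their pairwise products simplify in a controlled way. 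My plan is to prove by induction on $k+\ell$ that any product $[\infty,a_1,\ldots,a_k]\cdot[\infty,b_1,\ldots,b_\ell]$ equals a single move $[\infty,c_1,\ldots,c_m]$, using repeated \eqref{e:symdiff}-swaps to ``straighten'' the sequence and the regular two-graph hypothesis to supply the intermediate collinear triples required at each step. The hard part will be organising the rewriting rules so that they terminate at a canonical normal form; I expect \eqref{e:symdiff} together with the regular two-graph structure to provide precisely the combinatorial freedom needed.
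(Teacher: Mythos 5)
First, a point of reference: the survey does not prove Theorem~\ref{t:l8gp} at all --- it is quoted from \cite[Theorems A and 4.2]{GGPS} --- so your attempt can only be judged on its own terms, and on those terms there are genuine gaps in every part, two of which are fatal as written. In part (b) the parity constraints coming from the regular two-graph axiom are \emph{not} mutually incompatible when $\Gamma_\infty$ is disconnected: if $\Omega\setminus\{\infty\}=X\sqcup Y$ with every cross-triple $\{\infty,x,y\}$ in $\C$, the axiom forces only $\{x_1,x_2,y\}\in\C\Leftrightarrow\{\infty,x_1,x_2\}\in\C$ and its mirror, and the condition on $\{x_1,x_2,y_1,y_2\}$ is then automatically satisfied; the complete two-graph meets every parity constraint while $\Gamma_\infty$ is edgeless. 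So the contradiction you hope to extract by ``varying $a_i$ and $b_i$'' does not exist. What closes the argument is counting, not parity: $\C$ is a $2-(n,3,2\lambda)$ design, so all cross-triples being collinear gives $|Y|\leq 2\lambda$, whereas an edge $\{y_1,y_2\}$ inside $Y$ forces (via the parity relation just stated) all $2\lambda$ collinear triples through $\{y_1,y_2\}$ to have third point in $Y$, whence $|Y|\geq 2\lambda+2$; thus $Y$ is edgeless, and a singleton component forces $n=2\lambda+2$, contradicting the hypothesis. You never invoke the design property of $\C$ or the bound $n>2\lambda+2$, and without both the argument cannot succeed.

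Second, part (d) is the heart of the theorem and your plan for it is not a proof: you explicitly leave the termination of your rewriting scheme unresolved, and a normal-form argument of that shape is not how the result is obtained. The mechanism (visible in the survey's remarks after the theorem and in Example~\ref{ex5.2}) is to use \eqref{e:symdiff} together with the regular two-graph condition to show that every elementary move is an automorphism of $\De$, so that the conjugation identity $[a,b]^{[c,d]}=[a^{[c,d]},b^{[c,d]}]$ holds; by \cite[Lemmas 2.6 and 2.7]{GGPS} this single identity identifies $\L_\infty(\De)$ with the group generated by all elementary moves and $\pi_\infty(\De)$ with its point stabilizer. That structural fact is the missing idea, and it also underlies (c), where your claim that the two-graph hypothesis ``is precisely what controls'' the stabilization of $\tilde B$ is asserted rather than argued (note that $2$-transitivity of a group does not in general make a point stabilizer primitive on the remaining points). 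Finally, in (a) your sketch ends with ``should collapse $B$ to $\{\infty\}$''; the generic support-counting argument you are gesturing at (essentially that of Theorem~\ref{t:linf}) only yields primitivity for $n>3\lambda$, so reaching the stated bound $n>2\lambda+2$ requires a finer analysis of how the lines through two points of a block meet the remaining blocks, which you have not supplied.
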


It turns out that part (d) can be strengthened: in \cite[Theorem 4.2]{GGPS}) we show 
that the group $\L_\infty(\De)$ is in fact a subgroup of automorphisms of $\De$, and is 
a 3-transposition group with respect to its set $\E$ of elementary moves. This 
observation was combined in \cite{GGPS} with Fischer's classification of finite 
3-transposition groups (\cite{fischer}) to classify Conway groupoids arising from 
designs $\De$ that satisfy the hypotheses of part (d).

The conditions of part (d) were also used in \cite{GGPS} in another way: it turns 
out that, for any point $\infty \in \Omega$, the assumptions of part (d), together 
with the condition $n>2\lambda+2$, imply that $(\Omega\backslash\{\infty\}, \C_{\infty})$ 
is a polar space in the sense of Buekenhout and Shult, where $\C_\infty$ is the set of 
all triples of points in $\Omega \backslash \{\infty\}$ which occur in a line with $\infty$. 
In fact, the polar space $(\Omega\backslash\{\infty\}, \C_\infty)$ has the extra 
property that all lines in the space contain exactly three points. Such polar spaces 
were characterized in a special case by Shult \cite{Sh} and then later, in full generality, 
by Seidel \cite{Seidel}. Seidel's result was used to derive the following classification result. 
This result provides an alternative proof for the classification of the associated 
Conway groupoids, avoiding the use of 3-transposition groups.

\begin{Thm}\label{t:ggpsb}\cite[Theorem C]{GGPS}
Let $\De=(\Omega,\B)$ be a supersimple $2-(n,4,\lambda)$ design that satisfies  
$\eqref{e:symdiff}$ and for which $(\Omega, \C)$ is a regular two-graph where $\C$ 
is the set of collinear triples of points in $\Omega$. Then one of the following holds:
\begin{itemize}
\item[(a)] $\De$ is a Boolean quadruple system, as in Example~\ref{ex: boolean};
\item[(b)] $\De$ is a Symplectic quadruple system, as in Example~\ref{ex: symplectic};
\item[(c)] $\De$ is a Quadratic quadruple system, as in Example~\ref{ex: orthogonal}.
\end{itemize}
\end{Thm}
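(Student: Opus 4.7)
The plan is to fix a base point $\infty \in \Omega$ and reduce the classification of $\De$ to the classification of the incidence geometry $(\Omega\setminus\{\infty\}, \C_\infty)$, where $\C_\infty := \{\{a,b,c\} : \{\infty,a,b,c\} \in \B\}$. The strategy indicated in the text is: prove that $(\Omega\setminus\{\infty\}, \C_\infty)$ is a Buekenhout--Shult polar space in which every line has exactly three points; invoke Seidel's classification \cite{Seidel} of such polar spaces; and then reconstruct $\De$ from the polar space data. By Theorem~\ref{t:l8gp}(d), $\L_\infty(\De)$ is a group, which together with the strengthened version of Lemma~\ref{l: iso} makes the choice of $\infty$ inessential for the final conclusion.

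The first and hardest step is to verify the Buekenhout--Shult one-or-all collinearity axiom for $(\Omega\setminus\{\infty\}, \C_\infty)$. Supersimplicity of $\De$ ensures that each element of $\C_\infty$ is a triple of distinct points determining its $\De$-line uniquely, so all ``lines'' have size $3$. Fix $p \in \Omega\setminus\{\infty\}$ and a line $L=\{a,b,c\} \in \C_\infty$ with $p \notin L$; the task is to show that the number of $x \in L$ with $\{\infty,p,x\} \in \C$ is either $1$ or $3$. The regular two-graph condition applied to the four $4$-subsets $\{p,a,b,c\},\{\infty,p,a,b\},\{\infty,p,a,c\},\{\infty,p,b,c\}$ produces four parity relations on the relevant collinearity indicators. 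These, combined with the closure property $\eqref{e:symdiff}$ (which manufactures new blocks from pairs of known blocks meeting in two points), force the one-or-all dichotomy; the assumption $n > 2\lambda+2$ rules out degenerate configurations in which the polar space would trivially collapse.

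Once the polar space structure is established, Seidel's theorem classifies polar spaces with lines of size $3$. Up to isomorphism, such a polar space arises from one of: (i) a null bilinear form on an $\mathbb{F}_2$-vector space; (ii) a non-degenerate alternating form (giving $\Sp_{2m}(2)$); or (iii) a non-degenerate quadratic form (giving $\Oo_{2m}^\epsilon(2)$). In each case $\Omega\setminus\{\infty\}$ can be identified with (a coset of) the underlying $\mathbb{F}_2$-vector space, and the lines of $\C_\infty$ with the standard $2$-dimensional totally isotropic or singular subspaces.

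The final step is to recover $\De$ from the polar space. Lines of $\De$ through $\infty$ correspond tautologically to elements of $\C_\infty$; for lines $B \in \B$ not containing $\infty$, repeated applications of $\eqref{e:symdiff}$ to pairs of $\infty$-lines meeting in two points produce all such $B$, giving a reconstruction that depends only on the polar space. Matching Seidel's three types to the three design families identifies $\De$ as a Boolean quadruple system (Example~\ref{ex: boolean}), a Symplectic quadruple system (Example~\ref{ex: symplectic}), or a Quadratic quadruple system (Example~\ref{ex: orthogonal}). The main obstacle is verifying the polar space axiom: the delicate interaction between the two-graph condition and $\eqref{e:symdiff}$ needed to extract the one-or-all dichotomy is the technical heart of the argument, after which Seidel's theorem plus reconstruction bookkeeping finishes the proof.
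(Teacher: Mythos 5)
Your proposal follows essentially the same route the paper describes for \cite[Theorem C]{GGPS}: show that the hypotheses (the regular two-graph condition, \eqref{e:symdiff}, and $n>2\lambda+2$ in the non-degenerate case) force $(\Omega\setminus\{\infty\},\C_\infty)$ to be a Buekenhout--Shult polar space with three points per line, invoke Seidel's characterisation \cite{Seidel}, and then reconstruct $\De$ from the polar space, with the Boolean case accounting for the degenerate possibility. This matches the paper's stated strategy (the alternative to the 3-transposition-group argument), so no substantive divergence to report.
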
 

Note that the structures of the corresponding hole stabilizers and Conway groupoids are 
listed in the relevant examples. One naturally wonders if this theorem can be strengthened:

\begin{Qu}
Can Theorem \ref{t:ggpsb} be extended to cover the situation where ($\triangle$) does not hold?
Are there any additional examples?
\end{Qu}




We conclude by noting that the statements of Theorems~\ref{t: n and lambda} and \ref{t: d} both require particular clauses to deal with $M_{13}$: it seems that, in the world of Conway groupoids, $M_{13}$ is rather special. The following question connects this notion to the study of Conway groupoids with extra structure.

\begin{Qu}
Is $M_{13}$ the only Conway groupoid which is not itself a subgroup of $\Sym(\Omega)$, 
and for which the associated hole stabilizer $\pi_\infty(\Omega)$ is a primitive subgroup 
of $\Sym(\Omega\setminus\{\infty\})$?
\end{Qu}

Note that the Conway groupoids arising in Examples~\ref{ex: symplectic} and 
\ref{ex: orthogonal} have primitive hole stabilizers, but \emph{are} subgroups of $\Sym(\Omega)$.

\section{Generation games}


We have seen that a supersimple $2-(n,4,\lambda)$ design provides a convenient structure by 
which to associate with each pair $\{a,b\}$ of points a permutation $[a,b]$ sending $a$ to $b$.
We conclude this survey by considering a few other combinatorial structures which might be 
exploited to find interesting new Conway groupoids. 

\subsection{Working with triples}\label{triples}
For a $2-(n,3,\mu)$ design $(\Omega,\C)$, a map $[\cdot, \cdot]: \Omega \times \Omega 
\longrightarrow \Sym(\Omega)$ is said to be a \textit{pliable function associated with $(\Omega,\C)$} 
if the following hold:
\begin{itemize}
\item[(a)] for each $a,b \in \Omega$, $[a,b]$ sends $a$ to $b$ and $[a,b]^{-1}=[b,a]$;
\item[(b)] for $a\ne b$, $\supp([a,b]) = \{a,b\} \cup \{c \mid c \mbox{ is collinear with $a,b$} \}$.
\end{itemize}
Here $\supp(g)$ (for $g\in\Sym(\Omega)$) means the set of points of $\Omega$ moved by $g$,
and a point $c$ is \emph{collinear} with $\{a,b\}$ if $\{a,b,c\}\in\C$. 
We usually assume also that $[a,a]=1$ for all $a\in\Omega$.

For such a function, and for each $a_0,a_1,\dots, a_k\in \Omega$, define:
\[
[a_0,a_1,a_2,\ldots,a_k]:=[a_0,a_1][a_1,a_2]\cdots[a_{k-1},a_k],
\]
to be a {\it move sequence} and for each $\infty\in \Omega$, define:
\begin{equation}\label{eq:cg}
 \L_\infty([\cdot,\cdot]):= \{ [\infty,a_1,a_2,\ldots,a_k] \mid k \in \Z, a_1,\dots, a_k \in \Omega \}\subseteq \Sym(\Omega); \mbox{ and}
\end{equation} 
\begin{equation}\label{eq:hs}
\pi_\infty([\cdot,\cdot]):=  \{ [\infty,a_1,a_2,\ldots,a_{k-1}, \infty] \mid k \in \Z, a_1,\dots, a_{k-1} \in \Omega\} \subseteq \Sym(\Omega \backslash \{\infty\})
\end{equation} 
to be the \textit{Conway groupoid} and \textit{hole-stabilizer}, respectively, associated with $\infty$. 
We have the following examples.
\begin{enumerate}
 \item[(a)]  A supersimple $2-(n,4,\lambda)$ design $\De=(\Omega,\B)$ determines a 
$2-(n,3,2\lambda)$ design $(\Omega, \C)$, where $\C$ is the set of collinear triples of $\De$. 
The elementary moves associated with $\De$ determine a pliable function 
$[\cdot,\cdot]: \Omega \times \Omega \longrightarrow \Sym(\Omega)$ associated with $(\Omega, \C)$. 
Moreover $\L_\infty(\De)=\L_\infty([\cdot,\cdot])$ and $\pi_\infty(\De)=\pi_\infty([\cdot,\cdot])$, 
for each $\infty \in \Omega$. 

\item[(b)] \textit{Any} finite group $G$ determines a pliable function $[\cdot,\cdot]: G \times G 
\longrightarrow \Sym(G)$ associated with $(G, \C)$, where  $\C$ is the set of all $3$-subsets of $G$, 
by taking $[a,b]$ to be right multiplication by $a^{-1}b$. Thus $[a,b]$ is the unique element
of the right regular action of $G$ on $G$ which maps $a$ to $b$. Here $(\Omega, \C)$
is a $2-(n,3,n-2)$ design, where $n=|G|$, and $\L_\infty([\cdot,\cdot])\cong G$, $\pi_\infty([\cdot,\cdot])=1$. Observe that by (a) the Boolean $2-(2^m,4,2^{m-1}-1)$ designs of Example \ref{ex: boolean} determine pliable functions of this type where $G \cong (C_2)^m$.

\item[(c)] For an example which is not of either of these types consider the unique 
$2-(6,3,2)$ design $(\Omega, \C)$ whose lines are given by:
$$
\begin{array}{ccccc}
012 & 023 & 034 & 045 & 051 \\ 
124 & 235 & 341 & 452 & 513 \\
\end{array}
$$
Thus $(\Omega, \C)$ is the (extended) Paley two-graph with automorphism group ${\rm PSL}(2,5)$. 
For $a,b \in \Omega$, let $[a,b]:=\Id_\Omega$ if $a=b$ and otherwise set $[a,b]:=(a,b)(c,d)$ 
where $\{a,b,c\}$ and $\{a,b,d\}$ are the two lines containing $\{a,b\}$ in $\C$. Then 
$[\cdot,\cdot]$ becomes a pliable function associated with $(\Omega, \C)$, and it is easy 
to show that $\L_\infty([\cdot,\cdot])=\Aut(\C)= {\rm PSL}(2,5)$.

\item[(d)] More exotic examples arise also. For example the Higman--Sims sporadic simple group $HS$ has a $2$-transitive action on a set $\Omega$ of degree $176$, and $\Omega$ forms the point set of a $2-(176,3,162)$ design admitting $HS$ as a group of automorphisms. Moreover the setwise stabiliser in $HS$ of an unordered pair $\{a,b\}$  of distinct points has a unique central involution $z_{a,b}$.  These involutions form a  conjugacy class of $HS$ of size $15400 = \binom{176}{2}$ and, furthermore, the map $[\cdot,\cdot]: \Omega \times \Omega$ given by $[a,b]=z_{a,b}$ is a pliable function,
yielding $\L_\infty([\cdot,\cdot])=HS$. 
We are grateful to Ben Fairbairn for informing us of this example.

\end{enumerate}

Notice that $\L_\infty([\cdot,\cdot])$ formed a subgroup of $\Sym(\Omega)$ in several 
of the above examples. Under this assumption, we can prove the following:

\begin{Thm}\label{t:linf}
Let $[\cdot,\cdot]$ be a pliable function associated with a \textit{$2-(n,3,\mu)$ design} 
$(\Omega, \C)$, where $\mu > 4$, and suppose that $\L_\infty([\cdot,\cdot])$ is a group. 
If $n > \frac{3}{2}\mu$, then $\L_\infty([\cdot,\cdot])$ is primitive on $\Omega$.
\end{Thm}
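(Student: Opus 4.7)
Set $G := \L_\infty([\cdot,\cdot])$. Transitivity is immediate, since $[\infty,a] \in G$ sends $\infty$ to $a$; and because $G$ is a group containing $[\infty,a]^{-1} = [a,\infty]$ and $[\infty,a,b]$, it also contains
\[
[a,\infty]\cdot[\infty,a,b] \;=\; [a,\infty,a,b] \;=\; [a,b]
\]
for every pair $a,b\in\Omega$. Hence every elementary move lies in $G$, and by property (b) of the pliable function together with the $2$-$(n,3,\mu)$ design condition, each such move has support of size at most $\mu+2$.

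Next I will assume, for a contradiction, that $G$ is imprimitive, and fix a $G$-invariant block system of $m\geq 2$ blocks $B_1,\dots,B_m$ of common size $k\geq 2$ with $km=n$. Two inequalities will be extracted. First, if $a,b$ lie in distinct blocks $B_1,B_2$, the element $[a,b]$ swaps those two blocks (since $G$ permutes $\B$, $[a,b](a)=b$, and $[a,b](b)=a$), so all $2k$ of their points lie in $\supp([a,b])$, forcing $2k\leq \mu+2$. Secondly -- and this is the key combinatorial observation -- fix distinct $a,a'\in B_1$ and any $b\in\Omega\setminus B_1$: the element $[a',b]$ maps $B_1$ onto the block of $b$, and since $[a',b](a')=b$, injectivity forces $a$ to be sent to a point of the block of $b$ distinct from $b$; in particular $a$ is moved by $[a',b]$. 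Hence $a\in\supp([a',b])\setminus\{a',b\}$, which yields $\{a,a',b\}\in\C$. Letting $b$ range over the $n-k$ points of $\Omega\setminus B_1$ produces $n-k$ distinct triples of $\C$ containing the pair $\{a,a'\}$, and since at most $\mu$ triples of $\C$ contain any given pair we conclude $k(m-1)=n-k\leq \mu$.

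To finish, dividing the hypothesis $km=n>3\mu/2$ by the inequality $k(m-1)\leq \mu$ yields $m/(m-1)>3/2$, so $m<3$ and hence $m=2$. Then $n=2k>3\mu/2$ gives $k>3\mu/4$, while the first inequality gives $k\leq(\mu+2)/2$; together they force $3\mu/4<(\mu+2)/2$, which rearranges to $\mu<4$ and contradicts the hypothesis $\mu>4$. The main obstacle is the second inequality: recognising that imprimitivity compels every in-block pair to lie in at least $n-k$ distinct triples of $\C$. Once this observation is in hand, the remainder is a short arithmetic match-up of the two inequalities against the hypothesis $n>3\mu/2$.
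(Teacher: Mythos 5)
Your proof is correct and follows essentially the same route as the paper's: both arguments reduce to showing that for a pair $a,a'$ in a common block every point $b$ outside that block is collinear with $\{a,a'\}$ (you see this directly, via $[a',b]$ moving $a$ out of its block; the paper argues the contrapositive, showing every fixed point of $[a,a']$ lies in the block), giving $(m-1)k\leq\mu$, and both dispose of the two-block case by noting that the elementary move between the two blocks must have full support. One small slip: your parenthetical justification ``$[a,b](b)=a$'' is not warranted by the definition of a pliable function --- property (a) only gives $[a,b]\colon a\mapsto b$ and $[a,b]^{-1}=[b,a]$, and indeed in Example (b) of the paper $[a,b]$ is right multiplication by $a^{-1}b$, which need not interchange $a$ and $b$. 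This does not damage your argument: you invoke the inequality $2k\leq\mu+2$ only when $m=2$, where $[a,b]$ necessarily interchanges the two blocks simply because it does not fix the block of $a$; and even for general $m$ the inequality survives, since the block of $a$ and the ($[a,b]$)-preimage of that block are two disjoint blocks contained in $\supp([a,b])$.
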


\begin{proof}
Suppose that $n> \frac{3}{2}\mu$ and $\mu\geq4$. Then $n>\mu +\frac{1}{2}\mu \geq \mu+2$,
and we note that for distinct $a,b$, $|\supp([a,b])|=\mu+2$, by the definition of a pliable function.
Suppose  that $G$ acts imprimitively on $\Omega$ with $m$ blocks of size $k$, where 
$n=mk$ and $m>1, k>1$. First we observe that $m\geq 3$. This holds because, if $m=2$, then 
for points $a, b$ in different blocks of imprimitivity, the elementary move $[a,b]$ must 
interchange the two blocks, and hence $[a,b]$ must move every point, contradicting the fact
that $\supp([a,b])=\mu+2 < n$. Now let $a,b$ be distinct points in the same block of 
imprimitivity $\Delta$, and let $y$ be any point fixed by $[a,b]$ (such a point exists 
since $n> \mu+2$). By part (b) of the definition of a pliable function, it follows that 
$g:=[a,y]$ fixes $b$, so $g$ must fix $\Delta$ setwise, and hence $y=a^g \in \Delta$. 
This shows that every point fixed by $[a,b]$ lies in $\Delta$, or equivalently that  
$\Omega \setminus \Delta \subseteq \supp([a,b]).$ 
Thus
$$
\mu+2 = |\supp([a,b])| \geq (m-1)k+2 \quad\mbox{and hence}\quad \mu \geq (m-1)\cdot \frac{n}{m}.
$$ 
Rearranging this yields  
\begin{equation}\label{eq:mbound}
n\leq\frac{m}{(m-1)}\cdot \mu \leq\frac{3}{2} \cdot \mu
\end{equation} 
and this contradiction completes the proof.
 \end{proof}

The bound given in Theorem \ref{t:linf} is achieved by at least one design. 
To see this we construct a pliable function for a $2-(9,3,6)$ design $(\Omega, \C)$ 
with the property that $\L_\infty([\cdot,\cdot])$ is transitive but imprimitive. 
Let $\Omega:=(\mathbb{F}_3)^2$ and let $\C$ be the complement of an affine plane of order 3,
that is, 
$$
\C:=\{\{a,b,c\} \mid a, b, c \in \Omega,  a+b+c \neq 0 \}.
$$ 
For not necessarily distinct points $a,b \in \Omega$, set 
$$
[a,b]:=\prod_{w+a+b \neq 0} (w,a+b-w).
$$ 
Then $[a,b]$ is an involution with support of size eight and a unique fixed point $w=-a-b$. 
For each $\infty \in \Omega,$ $\L_\infty([\cdot,\cdot]) \cong (C_3 \times C_3): C_2$ with 
the nine non-trivial involutions given by $\{[a,b] \mid a,b \in \Omega\}$ (notice that 
$[a,b]=[c,d]$ whenever $a+b=c+d$). Furthermore, it is easy to see that  
$\L_\infty([\cdot,\cdot])$ preserves a system of imprimitivity with three blocks of size 3.

In fact this example is just one of an infinite family of $2-(3^k,3,3^k-3)$ designs 
constructed from complements of affine spaces with the property that they admit pliable 
functions with Conway groupoid an imprimitive group \cite{GGPS2}. Just as the Boolean quadruple systems 
in Example \ref{ex: boolean} provided the ``smallest'' examples of designs satisfying the 
hypotheses of Theorems \ref{t: d} and \ref{t:ggpsb}, one might hope that these  
$2-(3^k,3,3^k-3)$ designs could play a similar role in this more general context. For this reason, we ask the following:

\begin{Qu}\label{c:lowerbound}
Let $[\cdot,\cdot]$ be a pliable function associated with a \textit{$2-(n,3,\mu)$ design} 
$(\Omega, \C)$ and suppose that $\L_\infty([\cdot,\cdot])$ is a group. If $n > \mu+3$, 
is $\L_\infty([\cdot,\cdot])$ primitive?
\end{Qu}

\subsection{Using \texorpdfstring{$4$-hypergraphs}{4-hypergraphs}}
As discussed in Section~\ref{s: general}, most of the interesting Conway groupoids known arise from 
$2-(n,4,\lambda)$ designs. We gave one alternative approach in Subsection~\ref{triples} based on triple 
systems. Here we discuss briefly a few other possibilities involving $4$-hypergraphs which are not $2$-designs.
The following infinite family of examples was presented in \cite[Example 4.1]{GGNS}.

\begin{Ex}\label{ex5.2}
Let $n\geq3$, let  $\Omega$ be a set of size $2n$ consisting of the points 
$\{x_i, y_i \ \mid\ 1\leq i\leq n\}$, and let $\B$ be the set 
\[
 \B:=\{\ \{x_i,y_i,x_j,y_j\}\ \mid\ 1\leq i < j\leq n\}. 
\]
Then $\De :=(\Omega,\B)$ is a connected, pliable $4$-hypergraph, and, for any $\infty\in\Omega$, 
the Conway groupoid $\L_\infty(\De)$ and hole stabilizer $\pi_\infty(\De)$ are defined as 
in the first part of Section~\ref{s: general}. It was noted in \cite[Example 4.1]{GGNS} that $\pi_\infty(\De) \cong \Sym(2)\wr \Sym(n-1)$
if $n$ is odd, and that $\pi_\infty(\De)$ is an index $2$ subgroup of 
$\Sym(2)\wr \Sym(n-1)$ if $n$ is even. We give a short proof of this and also show that 
$G:=\L_\infty(\De)$ is a group, equal to $\Sym(2) \wr \Sym(n)$ if $n$ is odd, and its
the index $2$ subgroup $(\Sym(2) \wr \Sym(n))\cap\Alt(n)$ if $n$ is even.

The elementary moves are: for distinct $i,j\in\{1,\dots,n\}$,
\begin{equation}\label{eltmv}
[x_i,x_j] = [y_i,y_j] = (x_i,x_j)(y_i,y_j) \mbox{ and } [x_i,y_j]=[y_i,x_j] = (x_i,y_j)(x_j,y_i),  
\end{equation}
together with the fixed point free involution $[x_1,y_1] = \dots = [x_n,y_n] = \prod_{i=1}^n (x_i,y_i)$.
Since $\De$ is connected, we may assume that $\infty:=x_1$. It is readily checked that 
for each triple of elements $a,b,c \in \Omega$ we have 
$$
[a,b]^{[b,c]} = [a^{[b,c]},c].
$$ 
Hence an argument in \cite[Lemma 2.7]{GGPS} shows that $G$ is a group. 
Moreover \cite[Lemma 2.6]{GGPS} implies that $G_0:=\pi_\infty(\De)=\stab_G(\infty)$. 
(Note that, although both of the cited results in \cite{GGPS} are stated and proved for 
supersimple designs, in fact the argument carries through for connected, pliable 
$4$-hypergraphs.) 

Next we see that $G$ leaves invariant the system of imprimitivity $\Delta$ given by 
$$
\Delta:=\{\{x_i,y_i\} \mid 1 \leq i \leq n \}.
$$ 
Hence $G_0$ fixes the block $\{x_1,y_1\}$ and we have 
\[
G \leq \Sym(2) \wr \Sym(n) \mbox{ and } G_0 \leq \Sym(2) \wr \Sym(n-1). 
\]
Since $G_0=\stab_G(\infty)$, $G_0$ contains the 
elementary moves $[x_i,x_j], [x_i,y_j]$ given in \eqref{eltmv}, for each $i, j$ such that $2\leq i<j\leq n$, 
and moreover, $G_0$ contains the product of these two elements which is $(x_i,y_i)(x_j,y_j)$. 
Thus $G_0$ induces $\Sym(n-1)$ on $\{\{x_i,y_i\}\,\mid \, i=2,\dots, n\}$. Indeed $G_0\cap 
\Alt(2n-2)$ induces $\Sym(n-1)$.

Now let $K \cong \Sym(2)^{n}$ denote the base group of the wreath product. Then, for distinct $i, j$, 
$K$ contains $[x_i,x_j] [x_i,y_j] = (x_i,y_i)(x_j,y_j)$, and it follows 
that  $G \cap K$ contains all the even permutations 
in $K$. Together these points imply first that $G_0$ contains 
\[
\left(\Sym(2) \wr \Sym(n-1)\right)\cap \Alt(2n-2). 
\]
In particular, $G_0$ has index at most $2$ in $\Sym(2)\wr\Sym(n-1)$. If $n$ is even then 
every product of elementary moves is an even permutation, so that $G_0$ is as claimed. If 
$n$ is odd, then $G_0$ also contains 
$$
g:=[x_1,x_2,y_1,x_1] =  [x_1,x_2] [x_2,y_1][y_1,x_1] = (x_1,y_1)(x_2,y_2)[y_1,x_1]
$$ 
which is an odd permutation since $[y_1,x_1] = \prod_{i=1}^n (x_i,y_i)$ is an odd permutation. 
Thus in this case $G_0$ is the full wreath product $\Sym(2)\wr\Sym(n-1))$. 
Very similar arguments confirm the claims about $G$ (note that $|G|=|\Omega|\,|G_0|$).
\end{Ex}

The $4$-hypergraphs in this family are not $2$-designs since, for example, the pair $\{x_1, x_2\}$ lies in a unique 
line, while $\{x_1,y_1\}$ lies in $n-1$ lines. On the other hand, every pair of points is contained in 
at least one line. Hypergraphs with this property are said to be \emph{collinearly complete}, (see \cite{ABP}; 
their study goes back to work of D. G. Higman and J. E. McLaughlin in \cite{HM}). 

\begin{Qu}\label{ccdesigns}
Are there other interesting familes of Conway groupoids arising from collinearly complete $4$-hypergraphs which are not $2$-designs? 
\end{Qu}

The family of connected pliable $4$-hypergraphs extends beyond those which are collinearly complete. It includes, 
for example, generalised quadrangles with $4$ points on each line. There are only finitely many such geometries, and it is shown 
in \cite{GGPS2} that for each of them the Conway groupoid is the full alternating group.

\begin{Qu}
Are there interesting Conway groupoids arising from connected pliable $4$-hypergraphs which are not collinearly complete? 
\end{Qu}

\subsection{\texorpdfstring{$M_{24}$}{M24}}

In the previous two subsections we have started with different geometries, and sought to ``play'' analogues of Conway's original ``game'' in order to obtain groups and\,/\,or groupoids. 

What about if one works backwards, that is to say, one starts with a group and seeks to define a game on an appropriate geometry that generates it? As we described at the start of this paper, this was Conway's original approach: he came to define his game after observing certain structural coincidences between the groups $\PSL_3(3)$ and $M_{12}$. 

In fact this structural coincidence can precisely be described as a `3-local equivalence' (in the sense that $\PSL_3(3)$ and $M_{12}$ have isomorphic $3$-fusion systems) and one immediately wonders whether there are other (pairs of) $p$-locally equivalent groups whose structure can be exploited in some similar fashion to give a ``natural'' generation game. 

For example, might the group $M_{24}$ be amenable to such an analysis, perhaps via some analogue of the dualized game (described in \S\ref{s: variants}) played on an appropriate geometry? Might there exist a Conway groupoid $M_{25}$ -- or perhaps, as Conway himself mentioned after a lecture given by the third author -- might there be an $M_{26}$? In this direction, the 3-local equivalence between $M_{24}$ and $\PSL_3(3):2$ is particularly suggestive (note that the latter can be realised as a group of permutations on 26 letters). In any case, a generation game for $M_{24}$ would be immensely interesting and would naturally lead one to wonder about the other sporadic simple groups.

\begin{Qu}
 Can the group $M_{24}$ be generated in a natural way via a generation game on some finite geometry?
\end{Qu}


\def\cprime{$'$}
\providecommand{\bysame}{\leavevmode\hbox to3em{\hrulefill}\thinspace}
\providecommand{\MR}{\relax\ifhmode\unskip\space\fi MR }
\providecommand{\MRhref}[2]{%
  \href{http://www.ams.org/mathscinet-getitem?mr=#1}{#2}
}
\providecommand{\href}[2]{#2}

\end{document}